%
%
%
%
\documentclass[12pt]{amsart}
\usepackage{amsmath,amssymb,txfonts}
\headsep 6mm
\footskip 11mm
\baselineskip 4.5mm
\usepackage{times}
\usepackage{bbm}
\usepackage[dvips]{graphicx}
\allowdisplaybreaks
\usepackage{amsxtra}
\usepackage[cp1252]{inputenc}

\usepackage{mathrsfs}

\textwidth=17cm \textheight=22.9cm
\parindent=16pt
\oddsidemargin=-0.145cm \evensidemargin=-0.145cm \topmargin=-0.5cm
\numberwithin{equation}{section}

\newcommand{\C}{{\mathbb C^n}}

\newcommand{\R}{\mathbb R}

\newtheorem{theorem}{Theorem}[section]

\newtheorem{Remark}[theorem]{Remark}
\newtheorem{conjecture}[theorem]{Conjecture}
\newtheorem{BVP}[theorem]{Problem BVP}

\newtheorem{Corollary}[theorem]{Corollary}

\newtheorem{lemma}[theorem]{Lemma}

\newcommand{\ct}[1]{\langle {#1}\rangle \lower.3ex\mathrm{$_{t}$}}
\newcommand{\lt}[1]{[ {#1}] \lower.3ex\mathrm{$_{t}$}}

\numberwithin{equation}{section}


\begin{document}

\title[The minimal surface flow with prescribed contact angle]{The global solution of the minimal surface flow and translating surfaces}
\author{Li Ma, YuXin Pan}

\address{Li MA,  School of Mathematics and Physics\\
  University of Science and Technology Beijing \\
  30 Xueyuan Road, Haidian District
  Beijing, 100083\\
  P.R. China }
 \email{lma17@ustb.edu.cn}

 \address{YuXin Pan,  School of Mathematics and Physics\\
  University of Science and Technology Beijing \\
  30 Xueyuan Road, Haidian District
  Beijing, 100083\\
  P.R. China }
 \email{yxinpan@163.com}


\begin{abstract}
In this paper, we study evolved surfaces over convex planar domains which are evolving by the minimal surface flow
$$u_{t}= div\left(\frac{Du}{\sqrt{1+|Du|^2}}\right)-H(x,Du).$$
Here, we specify the angle of contact of the evolved surface to the boundary cylinder. The interesting question is to find translating solitons of the form $u(x,t)=\omega t+w(x)$ where $\omega\in \mathbb R$. Under an angle condition, we can prove the a priori estimate holds true for the translating solitons (i.e., translator), which makes the solitons exist.
We can prove for suitable condition on $H(x,p)$ that there is the global solution of the minimal surface flow.
Then we show, provided the soliton exists, that the global solutions converge to some translator.

{ \textbf{Mathematics Subject Classification 2010}: 35K15, 35B40}

{ \textbf{Keywords}: translating solution; contact angle; minimal surface flow, global solution, asymptotic limit}

\end{abstract}

\maketitle

\tableofcontents

\section{Introduction}\label{sect1}
Motivated by recent works from mean curvature flow and minimal surface flow (see Ecker \cite{Ec} and Altschuler-Wu \cite{AW}), we consider the minimal surface flow on a bounded planar domain with prescribed contact angle on the boundary. Throughout this paper, we let  $\Omega\subset R^2$ be a compact planar convex domain with smooth boundary $\partial\Omega$ and we denote by $k>0$ the curvature of the boundary $\partial\Omega$. The inward pointing normal and counterclockwise tangent vector to $\partial\Omega$ will be denoted by N and T. The upward normal for a graph $u:\Omega\rightarrow R^1$ is $\gamma=(-Du,1)/\sqrt{1+|Du|^2}$. The angle of contact between the graph and the boundary, $\alpha:\partial\Omega\rightarrow(0,\pi)$, is given by $\langle\gamma,N\rangle=\cos\alpha$ or $D_{N}u=-\cos\alpha\sqrt{1+|Du|^2}$. Given a smooth function $H:\overline{\Omega}\times \mathbb R^2\to \mathbb R$ and $T>0$. We consider the following initial-boundary value problem
\begin{equation}\label{eq1_1}
\left\{
\begin{aligned}
& u_{t}= div\left(\frac{Du}{\sqrt{1+|Du|^2}}\right)-H(x,D u) & on \ Q_{T} \\
& D_{N}u=-\cos\alpha\sqrt{1+|Du|^2} & on \ \Gamma_{T} \\
& u(x,0)=u_{0}(x) & on \ \Omega
\end{aligned}
\right.
\end{equation}
where $u_{0}\in C^{\infty}(\bar{\Omega})$ is the initial data satisfying $D_{N}u_{0}=-\cos\alpha\sqrt{1+|Du|^2}$ on $\partial\Omega$. We denote by
$$Q_{T}=\Omega\times[0,T);\quad \Gamma_{T}=\partial\Omega\times[0,T);\quad \Omega_{t}=\Omega\times \left\{ t \right\}.$$
The motivation for such a research came from the interest of the Capillary Surfaces. 
To our best knowledge, Lichnewsky and Temam \cite{Temam1978} \cite{Lich} had first studied generalized solutions of
the evolutionary minimal surface equation \eqref{eq1_1} (which we call the minimal surface flow). Then C.Gerhardt \cite{Ge} and K.Ecker \cite{Ec} had studied the evolutionary minimal surface equation and obtained very interesting gradient estimates for classical solutions.
To our surprise, there is not much works about the problem \eqref{eq1_1}. We refer to the related references in \cite{Ec} \cite{Ge,Ge2}.
We want to show that there is a priori estimate for the problem \eqref{eq1_1} so that we may get a global solution to the flow problem.
To study the behavior of the global flow, we need to study the translating solutions (i.e., translator) in the form $u(x,t)=w(x)+Ct$ to the problem \eqref{eq1_1} so that we have
corresponding elliptic problem as below
\begin{equation}\label{ell-eq1_2}
\left\{
\begin{aligned}
&  div\left(\frac{Du}{\sqrt{1+|Du|^2}}\right)-H(x,D u)=C, & on \ \Omega \\
& D_{N}u=-\cos\alpha\sqrt{1+|Du|^2} & on \ \partial \Omega.
\end{aligned}
\right.
\end{equation}
Note that adding an extra constant to the solution $u$ of the problem \eqref{ell-eq1_2}, it is still a solution to the problem \eqref{ell-eq1_2}.
Again, by assuming for any $(x,p)\in \Omega\times \mathbb R^2$,
\begin{equation}\label{ell-eq1_3}
k-|D_{T}\alpha|-|C+H(x,p)|\geq\delta_{0}>0;\quad k_{0}\geq k>0;\quad |\alpha|\leq\alpha_{0}<\pi,
\end{equation}
  we have a priori estimate for the problem \eqref{ell-eq1_2} so that we may get a smooth solution to the elliptic problem via the continuity method. Then we have the following result.
  \begin{theorem}\label{ell-thm1_1}
 Assume $p\cdot H_x(x,p)\geq 0$ and for some constant $c_0>0$, $|C+H(x,p)|\leq c_0$ for any $(x,p)\in \Omega\times \mathbb R^2$.
 We further assume that $\exists k_{0},\alpha_{0},\delta_{0} \in \mathbf{R}^{+}$ such that
\begin{equation}\label{eq1_4}
k-|D_{T}\alpha|-c_0\geq\delta_{0}>0;\quad k_{0}\geq k>0;\quad |\alpha|\leq\alpha_{0}<\pi,
\end{equation}
 Then for any smooth solution $u$ to \eqref{ell-eq1_2}, $\exists c_{1}=c_{1}(\alpha_{0},\delta_{0},k_{0},u_{0})$ so that $|Du|^2 \leq c_{1}$ on $\Omega$, and thus $u(x) \in C^{\infty}(\Omega)$.
\end{theorem}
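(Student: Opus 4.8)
The plan is to derive an interior and boundary gradient bound for solutions $u$ of \eqref{ell-eq1_2} by the classical maximum-principle technique for the prescribed-contact-angle problem, following the scheme of Ural'tseva and the capillary-surface literature (as adapted by Ecker, Gerhardt). Introduce the auxiliary function $v = \sqrt{1+|Du|^2}$ (equivalently work with $\log v$ or $e^{Ku}v$ for a suitable constant $K$), and study the function
\begin{equation}\label{aux-fn}
\Phi = \eta\, \psi(u)\, v
\end{equation}
where $\psi$ is an increasing function to be chosen (typically $\psi(u) = e^{Ku}$ with $K$ large depending on $c_0$) and $\eta$ is either $1$ (if one expects the maximum of $\Phi$ in the interior) or a cutoff localizing near $\partial\Omega$. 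The first step is to compute, at an interior maximum point of $\Phi$, the differential inequality satisfied by $\Phi$ using the equation $\mathrm{div}(Du/v) = H(x,Du)+C$; here the structural hypothesis $p\cdot H_x(x,p)\ge 0$ is what lets the terms coming from differentiating $H$ in $x$ enter with the favorable sign, while $|C+H(x,p)|\le c_0$ controls the zeroth-order term. If the maximum of $\Phi$ is attained in the interior of $\Omega$, one concludes $|Du|$ is bounded in terms of $c_0$, $k_0$, and $\sup|u|$; the latter $C^0$ bound should itself follow from the maximum principle applied to \eqref{ell-eq1_2} together with the angle condition, or be absorbed into the dependence on $u_0$.

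The substantive case is when the maximum of $\Phi$ occurs on $\partial\Omega$. There one differentiates the boundary condition $D_N u = -\cos\alpha\,\sqrt{1+|Du|^2} = -\cos\alpha\, v$ tangentially along $\partial\Omega$ and combines it with the inequality $D_N \Phi \ge 0$ (the outward/inward normal derivative at a boundary maximum). This is where the planar geometry enters through the curvature $k$ of $\partial\Omega$: differentiating $D_N u$ along $\partial\Omega$ produces terms $k\,D_T u$ and $D_T\alpha$, and the computation yields, roughly, an inequality of the form $(k - |D_T\alpha| - |C+H|)\,(\text{positive power of }v) \le C(\alpha_0,k_0,u_0)$. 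The hypothesis \eqref{eq1_4}, namely $k - |D_T\alpha| - c_0 \ge \delta_0 > 0$, is precisely what makes the coefficient on the left strictly positive, so the boundary gradient is bounded by $c_1(\alpha_0,\delta_0,k_0,u_0)$. The condition $|\alpha|\le\alpha_0<\pi$ keeps $\cos\alpha$ bounded away from $-1$, which is needed so that the contact-angle term does not degenerate.

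I expect the main obstacle to be the boundary computation: carefully expanding the tangential derivative of the capillarity condition in a boundary adapted frame $(N,T)$, keeping track of the curvature terms and of the cross terms between the second fundamental form of $\partial\Omega$ and the Hessian $D^2u$, and showing that everything not proportional to $(k-|D_T\alpha|-c_0)v^{\text{something}}$ can be bounded by the allowed quantities. A secondary technical point is verifying that the operator $\mathrm{div}(Du/v)$ linearized at $u$ is uniformly elliptic on the set where $\Phi$ is near its maximum so that the maximum principle applies; this is automatic once $v$ is shown to be finite, so the argument is really a bootstrap: assume $v$ large at the max point, derive a contradiction from \eqref{eq1_4}. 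Once $|Du|\le c_1$ is established, the equation \eqref{ell-eq1_2} becomes uniformly elliptic with smooth coefficients, and elliptic regularity (Schauder theory, after an $L^\infty$ bound on $Du$ gives De Giorgi--Nash--Moser Hölder continuity of $Du$) upgrades $u$ to $C^\infty(\Omega)$, giving the stated conclusion.
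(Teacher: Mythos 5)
Your boundary analysis is essentially the paper's own argument (following Altschuler--Wu): at a boundary maximum of $v=\sqrt{1+|Du|^2}$ one differentiates $D_Nu=-\cos\alpha\, v$ tangentially, uses the Frenet relations and the equation itself to eliminate $D_ND_Nu$ from $D_N|Du|^2\le 0$, and arrives at an inequality of the form $\left(k-|D_T\alpha|-|C+H|\right)v\le k+\frac{|D_T\alpha|}{\sin\alpha}$, which \eqref{eq1_4} converts into $v\le\delta_0^{-1}\left(k+\frac{k}{\sin\alpha}\right)$. (Minor point: what must stay nondegenerate in that computation is $\sin\alpha$, i.e. $\alpha$ bounded away from both $0$ and $\pi$, not merely $\cos\alpha$ away from $-1$.)

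The genuine gap is in your interior step. You propose the weighted function $\Phi=\eta\, e^{Ku}v$ and, in the case of an interior maximum, a gradient bound depending on $\sup|u|$, asserting that this $C^0$ bound ``follows from the maximum principle'' or can be absorbed into a dependence on $u_0$. Neither works: the elliptic problem \eqref{ell-eq1_2} has no zeroth-order term in $u$ and is invariant under $u\mapsto u+\mathrm{const}$ (the paper notes this explicitly), so no a priori $C^0$ bound exists, and $u_0$ is initial data for the parabolic problem, irrelevant here; normalizing $u(0)=0$ only yields $\sup|u|\le\operatorname{diam}(\Omega)\,\sup|Du|$, which makes the $e^{Ku}$-weight circular. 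The paper avoids any weight: differentiating the equation in $x_k$ and contracting with $u_k/v$ gives
\begin{equation*}
\frac{1}{v}\,\partial_i\bigl(a^{ij}v_j\bigr)-H_p(x,Du)\cdot Dv \;=\; |A|^2+\frac{1}{v}\,Du\cdot H_x(x,Du)\;\ge\;0,
\end{equation*}
where the hypothesis $p\cdot H_x(x,p)\ge 0$ is used exactly here (not to offset a zeroth-order term; also $|C+H|\le c_0$ plays no role in the interior, only at the boundary). The weak maximum principle then forces $\sup_\Omega v=\sup_{\partial\Omega}v$ with no $C^0$ information at all, after which your boundary computation (with the case split $|D_Tu|<1$ versus $|D_Tu|\ge 1$) finishes the gradient bound; the concluding $C^\infty$ regularity via De Giorgi--Nash and Schauder is fine as you state it.
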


  Using the apriori estimate above we can get the existence of the solution to the problem \eqref{ell-eq1_2} as in the proof of Theorem 2.1 in  \cite{Ge2} and by now it is standard and we may omit the detail. One may see \cite{MS} for the one dimensional case. We remark that our continuity method is based on the solution constructed in \cite{AW}. We may take $0\in \Omega$. Namely we consider the problem
  $$
  div\left(\frac{Du}{\sqrt{1+|Du|^2}}\right)=(1-s)\frac{c_1}{\sqrt{1+|Du|^2}}+s(H(x,D u)+C),
  $$
  with the boundary condition
  $$
  D_{N}u=-\cos\alpha\sqrt{1+|Du|^2}, \ \  on \ \partial \Omega,
  $$
  with $s\in [0,1]$. When $s=0$, there is a unique solution constructed by Altschular-Wu \cite{AW} with the normalization condition that $u(0)=0$. Then once we have the uniform gradient estimate, we have the uniform estimate of the solution $u$. Note that in our case we can not directly invoke the uniform estimate of the solution $u$ from the work of Concus-Finn\cite{CF} ( see also \cite{U}).

  We want to study the asymptotic behavior of the global solution of the problem \eqref{eq1_1} and we can show the profile of the flow is some translating soliton from the problem \eqref{ell-eq1_2}. As we shall see soon, we do obtain some new results for the problem \eqref{eq1_1}.

As we mentioned above, the problem similar to \eqref{eq1_1} has been studied by famous researchers such as C.Gerhardt and K.Ecker.
In \cite{Ec} Ecker studied the following parabolic equation
\begin{equation*}
\left\{
\begin{aligned}
& u_{t}= div\left(\frac{Du}{\sqrt{1+|Du|^2}}\right)-H(x,u) &in& \ \Omega\times(0,T)\\
& u(x,0)=u_{0}(x) &in& \ \Omega\times\{0\}
\end{aligned}
\right.
\end{equation*}
and proved that $|Du| \leq c\cdot d^{-2}$ provided the function $H$ is monotone increasing in $u$. One may also refer to the works of Concus-Finn \cite{CF}, etc, for the related elliptic problem and one may consult Lieberman's book \cite{Li2} for more references. We remark that our existence result for the problem \eqref{ell-eq1_2} may be different from existence results obtained in \cite{Li2} (see also \cite{MWW}) and the advantage of our result may be that our assumption is more geometric.

The closely related problem to the problem \eqref{eq1_1} is the mean curvature flow, which has less nonlinear than the problem \eqref{eq1_1} in some sense.
When $n\geq2$, $\varphi(x)=0$, Huisken \cite{Hu} studied the mean curvature flow
\begin{equation}\label{eq1_5}
\left\{
\begin{aligned}
& u_{t}= \sqrt{1+|Du|^2}div\left(\frac{Du}{\sqrt{1+|Du|^2}}\right) &in& \ \Omega\times[0,\infty)\\
& D_{N}u=\psi(x) &on& \ \partial\Omega\times[0,\infty)\\
& u(x,0)=u_{0}(x) &on& \ \Omega,
\end{aligned}
\right.
\end{equation}
and he proved that the solutions asymptotically converge to constant functions. For $n=2$, Altschuler and Wu \cite{AW} studied the equation \eqref{eq1_5} with $\varphi(x)=-\cos\alpha\sqrt{1+|Du|^2}$ and proved that the solutions of mean curvature flow converges to a surface which moves at a constant speed under some prescribed conditions.
For $n\geq2$, Guan \cite{Gu} studied the more generalized mean curvature type evolution equation with the prescribed contact angle problem as below
\begin{equation*}
\left\{
\begin{aligned}
& u_{t}= \sqrt{1+|Du|^2}div\left(\frac{Du}{\sqrt{1+|Du|^2}}\right)+\phi(x,u,Du) &in& \ \Omega\times (0,\infty) \\
& \langle -\gamma,N \rangle=\varphi(x) &on& \ \partial\Omega\times (0,\infty) \\
& u(x,0)=u_{0}(x) &on& \ \Omega.
\end{aligned}
\right.
\end{equation*}
He proved that the solution asymptotically approaches the solution to the corresponding stationary equation as $t\rightarrow\infty$ when the exterior force term $\phi(x,u,Du)=-ku\sqrt{1+|Du|^2}$ for $k>0$ or $\phi(x,u,Du)=n/u$ for $u>0$. As a consequence of the latter case, he obtained some existence results for minimal surfaces in hyperbolic space $\mathbb{H}^{n+1}$ with a prescribed contact angle condition.
Recently, Ma \cite{Ma} studied properties of the potential function of a translating soliton. Some interesting conclusions about the existence and the regularity of solution have been obtained, see, e.g.\cite{MY,Zh,Ge,Ge2,CGLM,Li,BNO}.\\

It should be interesting to investigate the nonparametric problem \eqref{eq1_1} with the nontrivial term $H(x,Du)$ and for this flow we can prove the following result.

\begin{theorem}\label{thm1_2}
 Assume that the initial quantity $|u_t(x,0)|$ is bounded on $\overline{\Omega}$. Then we have a local in time solution to \eqref{eq1_1} with the initial data $u_0$. Assume that there exists some uniform constant $c_1\geq 0$ such that $|u_t(x,0)|+|H(x,p)|\leq c_1$ on $\Omega\times \mathbb R^2$ and we further assume that $\exists k_{0},\alpha_{0},\delta_{0} \in \mathbf{R}^{+}$ such that
\begin{equation}\label{eq1_6}
k-|D_{T}\alpha|-c_1\geq\delta_{0}>0;\quad k_{0}\geq k>0;\quad |\alpha|\leq\alpha_{0}<\pi
\end{equation}
and $p\cdot H_x(x,p)\geq 0$ on $\Omega\times \mathbb R^2$, then the solution is a global one and $\exists c_{2}=c_{2}(\alpha_{0},\delta_{0},k_{0},u_{0})$ so that $|Du|^2 \leq c_{2}$ on $Q_{\infty}$, and thus $u(x,t) \in C^{\infty}(Q_{\infty})$.
\end{theorem}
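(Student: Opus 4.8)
The plan is to prove Theorem \ref{thm1_2} in three stages: (i) short-time existence, (ii) a time-independent bound on $|u_t|$, and (iii) a time-independent gradient bound, after which parabolic Schauder theory promotes the solution to $C^\infty(Q_\infty)$ and prevents finite-time blow-up. For stage (i), since \eqref{eq1_1} is a quasilinear parabolic equation with a smooth oblique (capillary-type) boundary condition and compatible smooth initial data, standard linearization plus the implicit function theorem in parabolic Hölder spaces (as in Lieberman's book \cite{Li2}) gives a local-in-time classical solution; the hypothesis that $|u_t(x,0)|$ is bounded on $\overline\Omega$ is exactly the first-order compatibility that makes $u_t$ continuous up to $t=0$.

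For stage (ii), differentiate the equation in $t$: setting $v=u_t$, one finds that $v$ solves a linear parabolic equation of the form $v_t = a^{ij}(Du)D_{ij}v + b^i D_i v - H_x(x,Du)\cdot Dv$ with the linearized (homogeneous in $v$, since $\alpha$ and the domain are time-independent) oblique boundary condition. The maximum principle then shows $\sup_{\Omega}|u_t(\cdot,t)|$ is nonincreasing — more precisely controlled by $\sup_\Omega|u_t(\cdot,0)|$ and a contribution that the structure of the boundary term keeps under control — so $|u_t|\le c_1'$ on $Q_\infty$ for a constant depending only on the data. (Here one may need the hypothesis $p\cdot H_x\ge 0$ or the sign structure of the capillary condition to absorb boundary contributions; I expect this to require a careful but routine computation modeled on Altschuler–Wu \cite{AW} and Huisken \cite{Hu}.)

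Stage (iii) is the heart of the matter and the main obstacle. Having the time-independent bound $|u_t|\le c_1'$, the right-hand side $\operatorname{div}(Du/\sqrt{1+|Du|^2}) = u_t + H(x,Du)$ is bounded by $c_1' + \sup|H| \le$ (some constant), uniformly in $t$. Thus at each fixed time $t$, $u(\cdot,t)$ is an (approximate) prescribed-mean-curvature graph over $\Omega$ satisfying the same capillary boundary condition, with a uniformly bounded right-hand side. One then applies the a priori gradient estimate for the elliptic problem, Theorem \ref{ell-thm1_1}, with the role of $C+H(x,p)$ played by $u_t(x,t)+H(x,Du)$; the structural hypotheses \eqref{eq1_6} (namely $k-|D_T\alpha|-c_1\ge\delta_0>0$, $k_0\ge k>0$, $|\alpha|\le\alpha_0<\pi$) are precisely what \eqref{eq1_4} requires with $c_0$ replaced by $c_1$, and $p\cdot H_x\ge 0$ is assumed as well. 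This yields $|Du|^2\le c_2$ on $Q_\infty$ with $c_2=c_2(\alpha_0,\delta_0,k_0,u_0)$, independent of $t$.

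Finally, with uniform $C^1$ bounds in hand the equation becomes uniformly parabolic with smooth coefficients; the Krylov–Safonov and parabolic Schauder estimates (together with the standard treatment of the oblique boundary condition) bootstrap to uniform $C^{k,\alpha}$ bounds on $Q_\infty$ for every $k$, so no finite-time singularity can form and the local solution extends to all $t>0$ with $u\in C^\infty(Q_\infty)$. The delicate point throughout is that all constants must be shown to be genuinely $t$-independent; the only place real work is needed is verifying that the $t$-differentiated equation's boundary term does not spoil the maximum principle in stage (ii), and then checking that Theorem \ref{ell-thm1_1} applies verbatim at frozen time in stage (iii).
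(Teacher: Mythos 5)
Your stages (i) and (ii) follow the paper's route: local existence as in Ecker, and the time-derivative bound obtained by differentiating \eqref{eq1_1} in $t$, so that $u_t$ solves $\partial_t u_t + H_p(x,Du)\cdot Du_t = \partial_i\bigl(\tfrac1v a^{ij}(u_t)_j\bigr)$; the weak maximum principle pushes the extremum to $\Gamma_T\cup\Omega_0$, and at a boundary extremum the differentiated contact-angle condition \eqref{eq2_5} forces $\sin^2\alpha\, D_N u_t=0$, contradicting the Hopf boundary point lemma, so $|u_t|\le \sup_\Omega|u_t(\cdot,0)|$ on $Q_T$ (this is Lemma \ref{lem4_1}; note that $p\cdot H_x\ge0$ is not needed here, since $t$-differentiation produces only the gradient term $H_p\cdot Du_t$).

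Stage (iii), however, has a genuine gap. You propose to freeze $t$ and apply Theorem \ref{ell-thm1_1} ``verbatim'' with $C+H(x,p)$ replaced by $u_t(x,t)+H(x,Du)$, but Theorem \ref{ell-thm1_1} is stated and proved for a \emph{constant} $C$, and at frozen time $u_t(\cdot,t)$ is a genuinely $x$-dependent function whose gradient you do not control. The elliptic proof's essential first step is to differentiate the equation in $x_k$ and contract with $u_k/v$ to show $\sup_\Omega v=\sup_{\partial\Omega}v$; doing this to the frozen equation produces the extra term $\tfrac{u_k}{v}\,\partial_k u_t(x,t)$, which has no sign and is not bounded by $\|u_t\|_\infty$, so the interior reduction to the boundary collapses (only the boundary-maximum computation, which uses just the pointwise value of the right-hand side, survives). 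The paper's proof (Theorem \ref{thm4-2}) avoids this precisely by running the argument parabolically: since $\tfrac{u_k}{v}D_k u_t=v_t$, the uncontrolled term is exactly the time derivative of $v$, and one gets
\begin{equation*}
v_t+\frac{u_k}{v}H_{x_k}+H_p\cdot Dv=\frac1v\bigl(a^{ij}v_j\bigr)_i-|A|^2,
\end{equation*}
so with $p\cdot H_x(x,p)\ge0$ the parabolic weak maximum principle yields $\sup_{Q_T}v=\sup_{\Gamma_T\cup\Omega_0}v$. A maximum on $\Gamma_T$ is then treated by the boundary identities \eqref{eq2_5}--\eqref{eq2_8} and \eqref{a1}--\eqref{a3}, using only $|u_t|+|H|\le c_1$ from stage (ii) and \eqref{eq1_6}, giving $v\le\delta_0^{-1}\bigl(k+\tfrac{k}{\sin\alpha}\bigr)$, while a maximum on $\Omega_0$ is controlled by $u_0$ (which is where the $u_0$-dependence of $c_2$ enters). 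Replacing your frozen-time invocation of the elliptic theorem by this spacetime maximum principle for $v$ closes the gap; your final bootstrap to $C^\infty(Q_\infty)$ and global extension is then fine.
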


As we shall see that the understanding of the
 elliptic version of \eqref{eq1_1}
\begin{align*}
& \frac{1}{\sqrt{1+|Dw|^2}}a^{ij}(Dw) D_i D_j w = C +H(x,Du),\ &on&\ \Omega\\
& D_N w = -\cos\alpha \sqrt{1+|Dw|^2} \ &on&\ \partial\Omega.
\end{align*}
is important for us to study the asymptotic behavior of the global solution.
Here $C$ is a constant determined uniquely by
$
 \frac{\int_{\partial\Omega} \cos\alpha ds-\int_\Omega H(x,Dw)dx}{|\Omega|}.
$
Actually, to study the asymptotic behavior of the global solutions, we need results from the corresponding elliptic problem.

In short, sometimes we denote by $H=H(x,p)$. We have the following result.
\begin{theorem}\label{thm1_3} Let $\Omega$ be a bounded convex domain in the plane and $H=H(x)$ is a smooth function on $\overline{\Omega}$ such that the problem \eqref{ell-eq1_2} has a solution.  Let
$u(x,t)$ be the  global solution with the initial data $u_0$ to the problem \eqref{eq1_1} and let $\alpha\in (0,\pi)$ for the initial data $u_0$ for the problem \eqref{eq1_1}. Then the global solution
$u(x,t)$ converges as $t\rightarrow \infty$ to a surface $u_{\infty}$ with mean curvature $H(x)$ (unique up to translation) which moves at a constant speed $C$ (uniquely determined by the boundary data).
\end{theorem}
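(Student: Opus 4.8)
The plan is to subtract the linear-in-time drift from the flow, so that the shifted flow becomes an $L^2$ gradient flow; to extract a subsequential limit that solves the elliptic problem \eqref{ell-eq1_2}; and then to upgrade subsequential convergence to full convergence via uniqueness for \eqref{ell-eq1_2} modulo additive constants together with a conservation law. First I would pin down the speed $C$: integrating the interior equation of \eqref{eq1_1} over $\Omega$ and using the divergence theorem and the contact angle condition (so that the outer normal component of $Du/\sqrt{1+|Du|^2}$ on $\partial\Omega$ equals $\cos\alpha$) gives $\tfrac{d}{dt}\int_\Omega u\,dx=\int_{\partial\Omega}\cos\alpha\,ds-\int_\Omega H(x)\,dx=C|\Omega|$, hence $\tfrac1{|\Omega|}\int_\Omega u(\cdot,t)\,dx=\tfrac1{|\Omega|}\int_\Omega u_0\,dx+Ct$. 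Thus $\tilde u:=u-Ct$ has spatial mean independent of $t$ and solves $\tilde u_t=\operatorname{div}\!\big(D\tilde u/\sqrt{1+|D\tilde u|^2}\big)-H(x)-C$ on $\Omega$ with $D_N\tilde u=-\cos\alpha\sqrt{1+|D\tilde u|^2}$ on $\partial\Omega$. This is precisely the $L^2(\Omega)$ gradient flow of $\mathcal F(w)=\int_\Omega\sqrt{1+|Dw|^2}\,dx+\int_\Omega(H(x)+C)w\,dx-\int_{\partial\Omega}(\cos\alpha)w\,ds$, whose first variation reproduces the interior equation and whose natural boundary term is the contact angle condition; by the choice of $C$, $\mathcal F$ is invariant under $w\mapsto w+\mathrm{const}$. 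Since $\tilde u$ satisfies the contact angle condition for all $t$, we get $\tfrac{d}{dt}\mathcal F(\tilde u(\cdot,t))=-\int_\Omega|\tilde u_t|^2\,dx\le0$.

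Next I would harvest compactness and dissipation. By Theorem \ref{thm1_2} the global solution has a uniform-in-time gradient bound, so $\tilde u$ satisfies a uniformly parabolic equation with uniformly bounded coefficients and is itself bounded; standard interior and boundary parabolic estimates (De Giorgi--Nash--Moser and Schauder) then give $\|\tilde u\|_{C^k(\overline\Omega\times[t,t+1])}\le C_k$ for all $t\ge1$ and all $k$. In particular $\{\tilde u(\cdot,t):t\ge1\}$ is precompact in $C^2(\overline\Omega)$, so $\mathcal F(\tilde u(\cdot,t))$ is bounded below and converges, whence $\int_0^\infty\int_\Omega|\tilde u_t|^2\,dx\,dt<\infty$. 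Differentiating the equation once more in $t$ and using the uniform estimates bounds $\|\tilde u_{tt}(\cdot,t)\|_{L^2}$, so $t\mapsto\|\tilde u_t(\cdot,t)\|_{L^2}^2$ has bounded derivative and, being integrable on $[0,\infty)$, tends to $0$; interpolating with the uniform higher-order bounds gives $\tilde u_t(\cdot,t)\to0$ in $C^2(\overline\Omega)$, i.e.\ $u_t(\cdot,t)\to C$. (Alternatively one can argue via the maximum principle applied to $v=u_t$, which solves a linear uniformly parabolic equation with no zeroth-order term and, after differentiating the contact angle condition, a regular oblique boundary condition; this too forces $u_t(\cdot,t)\to C$.)

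It remains to identify the limit. For any $t_j\to\infty$ a subsequence of $\tilde u(\cdot,t_j)$ converges in $C^2(\overline\Omega)$ to some $w_\ast$; passing to the limit in the interior equation (whose left side tends to $0$) and in the boundary condition shows that $w_\ast$ solves \eqref{ell-eq1_2}, and $\tfrac1{|\Omega|}\int_\Omega w_\ast\,dx=\tfrac1{|\Omega|}\int_\Omega u_0\,dx$ by conservation of the mean. I would then prove a uniqueness lemma: the difference $w$ of two solutions of \eqref{ell-eq1_2} satisfies a linear, uniformly elliptic, divergence-form equation with no zeroth-order term --- here it is essential that $H=H(x)$ does not depend on $Dw$, so that the mean-curvature terms cancel exactly --- together with the homogeneous oblique boundary condition obtained by subtracting the two contact angle conditions, which is a regular oblique problem since $\alpha$ takes values in a compact subset of $(0,\pi)$; the strong maximum principle and the Hopf boundary point lemma then force $w\equiv\mathrm{const}$. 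Consequently \eqref{ell-eq1_2} has a unique solution $w_\infty$ with the prescribed mean, every subsequential limit of $\tilde u(\cdot,t)$ equals $w_\infty$, and by precompactness $\tilde u(\cdot,t)\to w_\infty$ in $C^2(\overline\Omega)$ (indeed in $C^\infty$). Therefore $u(x,t)=\tilde u(x,t)+Ct\to u_\infty(x,t):=w_\infty(x)+Ct$, a graph moving at constant speed $C$ with mean curvature $H(x)$ and the prescribed contact angle, unique up to translation, with $C$ determined by the boundary data.

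The step I expect to be the main obstacle is securing the uniform-in-time higher regularity of $\tilde u$ and, relatedly, improving $\int_0^\infty\|\tilde u_t\|_{L^2}^2\,dt<\infty$ to the statement $u_t(\cdot,t)\to C$ along the whole flow rather than along a subsequence; the uniform gradient bound of Theorem \ref{thm1_2} is exactly what makes this bootstrap possible. Once that is in place, the only delicate points left in the uniqueness lemma are the exact cancellation of the $H$-terms (which uses $H=H(x)$) and the obliqueness of the linearized boundary condition (which uses $\alpha$ bounded away from $0$ and $\pi$), and these close the argument.
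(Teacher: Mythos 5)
Your argument is correct in outline, but it follows a genuinely different route from the paper. The paper proves the convergence by comparison with the assumed translator: Corollary \ref{Cor5_1} sandwiches the flow between two vertically shifted translating solutions $w(x)+Ct+\mathrm{const}$ to get $|u-Ct|\leq c_3$, and Theorem \ref{thm5_2} looks at the difference of the global solution and the translator, which satisfies a linear divergence-form parabolic equation with a conormal boundary condition; the oscillation of this difference is strictly decreasing unless constant, and a limit of time-translates combined with the strong maximum principle forces the difference to tend to a constant (the energy-dissipation identity you use appears in the paper only in Remark \ref{Rem5_3}, for the case $C=0$). You instead subtract the drift $Ct$, exhibit the shifted flow as the $L^2$ gradient flow of a capillary-type functional that is translation-invariant precisely by the choice of $C$, extract compactness from the uniform gradient bound of Theorem \ref{thm1_2} plus parabolic regularity, identify subsequential limits as solutions of \eqref{ell-eq1_2}, and conclude full convergence from uniqueness up to constants (essentially the uniqueness argument the paper gives in Section \ref{sect3}) together with conservation of the spatial mean. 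What your route buys: it does not really use the hypothesis that \eqref{ell-eq1_2} has a solution (the limit itself furnishes one, given the uniform estimates), it identifies which vertical translate is selected via the conserved mean, and it replaces the eternal-solution/oscillation argument by standard Lyapunov-dissipation reasoning; what the paper's route buys is that, granting the translator, it needs only the maximum principle and avoids the higher-order uniform estimates you invoke for the bootstrap and for $\tilde u_{tt}$. Two small points to tighten: justify explicitly that $\tilde u=u-Ct$ is uniformly bounded (this follows from your conserved-mean computation combined with the uniform spatial gradient bound, or alternatively by the paper's sandwich argument, but it is asserted rather than derived), and drop or expand the parenthetical claim that the maximum principle alone forces $u_t\to C$: Lemma \ref{lem4_1} only gives monotone bounds on $\sup u_t$ and $\inf u_t$, and together with $\int_\Omega u_t\,dx=C|\Omega|$ this does not yield convergence without the dissipation (or time-translation) argument you already have.
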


In similar way, we may have the result below.
\begin{theorem}\label{thm1_4} Assume $H=H(x)$ as above. Let $\alpha$ be the angle function such that elliptic version of the problem \eqref{eq1_1} such that it has a solution
with  the domain $\Omega$ being a bounded convex domain in the plane.
If $\displaystyle \int_{\partial\Omega} \cos\alpha -\int_{\Omega} H(x)= 0$, then $C = 0$, hence $u_{\infty}$ is a surface with mean curvature $H(x)$.
\end{theorem}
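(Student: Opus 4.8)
The plan is to extract the explicit value of the translating speed $C$ by integrating the elliptic equation over $\Omega$ and using the divergence theorem together with the contact-angle boundary condition; the hypothesis then forces $C=0$, and the conclusion follows from Theorem~\ref{thm1_3}.

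First I would integrate the divergence-form elliptic equation in \eqref{ell-eq1_2} over $\Omega$. Since $H=H(x)$ depends only on $x$, the term $\int_\Omega H(x,Dw)\,dx$ is just $\int_\Omega H(x)\,dx$, independent of the particular solution $w$. Applying the divergence theorem, $\int_\Omega \operatorname{div}\bigl(Dw/\sqrt{1+|Dw|^2}\bigr)\,dx = \int_{\partial\Omega}\langle Dw/\sqrt{1+|Dw|^2},\nu\rangle\,ds$, where $\nu=-N$ is the outward unit normal (recall $N$ denotes the inward normal in our convention). Substituting the boundary condition $D_Nw=-\cos\alpha\sqrt{1+|Dw|^2}$ gives $\langle Dw/\sqrt{1+|Dw|^2},\nu\rangle = -D_Nw/\sqrt{1+|Dw|^2}=\cos\alpha$ on $\partial\Omega$. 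Hence integrating \eqref{ell-eq1_2} yields
\[
\int_{\partial\Omega}\cos\alpha\,ds-\int_\Omega H(x)\,dx = C\,|\Omega|,
\]
which is exactly the formula for $C$ quoted just before Theorem~\ref{thm1_3}. Under the hypothesis $\int_{\partial\Omega}\cos\alpha-\int_\Omega H(x)=0$ the left-hand side vanishes, and since $|\Omega|>0$ we conclude $C=0$. Finally, by Theorem~\ref{thm1_3} the global solution $u(x,t)$ of \eqref{eq1_1} converges as $t\to\infty$ to a surface $u_\infty$ which moves at the constant speed $C$; because $C=0$, $u_\infty$ is stationary and solves $\operatorname{div}\bigl(Du_\infty/\sqrt{1+|Du_\infty|^2}\bigr)=H(x)$ with the prescribed contact angle, i.e.\ $u_\infty$ is a surface with mean curvature $H(x)$.

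The argument is essentially bookkeeping, so I do not expect a serious obstacle; the one point requiring care is the sign convention for the normal (inward $N$ versus outward $\nu$) in the divergence theorem, so that the boundary term is inserted with the correct orientation and $\int_{\partial\Omega}\cos\alpha\,ds$ appears with the right sign. One should also keep in mind that the hypothesis already presupposes that the elliptic version of \eqref{eq1_1} has a solution, which is precisely what is needed to invoke Theorem~\ref{thm1_3} and thereby identify the limit of the flow.
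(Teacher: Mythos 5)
Your proposal is correct and follows essentially the same route as the paper: the identity $C\,|\Omega|=\int_{\partial\Omega}\cos\alpha\,ds-\int_\Omega H(x)\,dx$ is exactly the paper's formula \eqref{eq3_9}, obtained by integrating the elliptic equation and using the divergence theorem with the outward normal $-N$ and the contact-angle condition, so the hypothesis gives $C=0$. The final step, passing from $C=0$ to the conclusion that the limit $u_\infty$ is a stationary surface of mean curvature $H(x)$, is the same appeal to the convergence result (Theorem~\ref{thm1_3}, substantiated in the paper by Theorem~\ref{thm5_2} and Remark~\ref{Rem5_3}) that the paper itself makes.
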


Similar results about higher dimensional minimal surface flow with prescribed contact angle has been obtained in \cite{MP}.

This paper is organized as follows. Section \ref{sect2} is concentrated on some notations and the elliptic problem on bounded convex domains. The key time derivative and gradient estimate to solutions of \eqref{eq1_1} is given in section \ref{sect3}. In section \ref{sect4},  we prove Theorem \ref{thm1_2} and Theorem \ref{thm1_3} .

\section{Boundary geometry and boundary condition}\label{sect2}
  In this section we review the boundary geometry and boundary value analysis from \cite{AW}.
For convenience, we recall some notations from \cite{AW}. We let
\begin{eqnarray*}
\displaystyle v&=&\sqrt{1+|Du|^2},\\
\displaystyle a^{ij}&=&\delta_{ij} - \frac{D_{i}u D_{j}u}{1+|Du|^2},
\end{eqnarray*}
and
$\displaystyle u_{t}=\frac{\partial u}{\partial t}$.
Recall that the mean curvature of $\mathbb H$ of graph of $u$ is
\begin{eqnarray*}
\mathbb H = div\left(\frac{Du}{\sqrt{1+|Du|^2}}\right) = \partial_{i}\left(\frac{u_i}{\sqrt{1+|Du|^2}}\right) = \frac{1}{v}a^{ij}u_{ij},
\end{eqnarray*}
and the curvature norm $|A|$ of graph of $u$ is given by
\begin{eqnarray*}
|A|^2 = \frac{1}{v^2}a^{ij}a^{kl}u_{il}u_{jk}.
\end{eqnarray*}
For vectors $V$, $W$ we make the formula as in \cite{AW} that
\begin{eqnarray}\label{eq2_1}
D_{V}D_{W}u = V^{i}W^{j}D_{ij} ^{2}u + \langle D_{V}W,Du \rangle.
\end{eqnarray}
On the boundary $\partial\Omega$, we define
\begin{eqnarray*}
\displaystyle &a^{TN}&=a^{ij}T_{i}N_{j} = -\frac{D_T u D_N u}{1+|Du|^2} = a^{NT},\\
\displaystyle &a^{TT}&=a^{ij}T_{i}T_{j} = 1-\frac{|D_T u|^2}{1+|D u|^2} = \frac{1+|D_N u|^2}{1+|D u|^2},\\
\end{eqnarray*}
and
$$
\displaystyle a^{NN}=a^{ij}N_{i}N_{j} = 1-\frac{|D_N u|^2}{1+|D u|^2} = \frac{1+|D_T u|^2}{1+|D u|^2}.
$$

We also recall some  calculation rules  based on the matrix $(a_{ij})$, which are very useful
 formulae on the boundary so that we can make the uniform gradient estimates only by boundary data.

On $\partial\Omega$, we make smooth extensions of $N$ and $T$ to a tubular neighborhood of $\partial\Omega$. Let $\displaystyle \left\{ \frac{\partial}{\partial r},\frac{\partial}{\partial\theta} \right\}$ be the local frames on the tubular neighborhood of $\partial\Omega$ that $\displaystyle \left\langle \frac{\partial}{\partial r},\frac{\partial}{\partial \theta}\right\rangle = 0$, $\displaystyle \left|\frac{\partial}{\partial r}\right|^2=1$, and $\displaystyle \left\{ \frac{\partial}{\partial r},\frac{\partial}{\partial\theta} \right\}_{\partial\Omega} = \{N,T\}$.
 The following lemma  is from the work \cite{AW} (see 2.1. Lemma there). We present the full proof for completeness.

\begin{lemma}\label{lem2_1}
 Define a function $\displaystyle \phi$ such that $\displaystyle \left|\phi^{-1} \frac{\partial}{\partial \theta}\right|^2 = 1$ so that we have $\displaystyle \{N,T\} = \left\{\frac{\partial}{\partial r},\phi^{-1} \frac{\partial}{\partial \theta}\right\}$. Then we have the following two assertions:

1. for any $h \in C^{\infty}(\bar{\Omega})$, the interchange of derivatives is given by $D_N D_T h = D_T D _N h + kD_T h$;

2. $D_T T = kN$; $\quad D_T N = -k T$; \quad $D_N T = D_N N =0$.
\end{lemma}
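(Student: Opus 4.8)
The plan is to reduce both assertions to a short computation in Fermi (boundary--normal) coordinates. For $x$ in a thin one--sided tubular neighborhood of $\partial\Omega$, let $r(x)\ge 0$ be the Euclidean distance from $x$ to $\partial\Omega$ and let $\theta$ be the arclength parameter on $\partial\Omega$ of the foot point of $x$; then $(r,\theta)$ are coordinates with $\{r=0\}=\partial\Omega$, the integral curves of $\partial/\partial r$ are the straight inward normal lines so that $\partial/\partial r=N$, and by the Gauss lemma the flat metric is $dr^2+\phi^2\,d\theta^2$ for a positive function $\phi=\phi(r,\theta)$ --- exactly the $\phi$ in the statement, since then $\phi^{-1}\partial/\partial\theta$ is the unit tangent $T$. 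The two normalizations I would record are $\phi(0,\theta)\equiv 1$ (because $\theta$ is arclength on $\partial\Omega$) and $\phi_r(0,\theta)=-k(\theta)$, the first variation of arclength of the parallel curves in the direction of the inward normal; the cleanest check of the latter, signs included, is on a disc, where one gets $\phi=1-kr$ outright.

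First I would compute the Christoffel symbols of $dr^2+\phi^2 d\theta^2$ in $(x^1,x^2)=(r,\theta)$: the only nonzero ones are $\Gamma^1_{22}=-\phi\phi_r$, $\Gamma^2_{12}=\Gamma^2_{21}=\phi_r/\phi$ and $\Gamma^2_{22}=\phi_\theta/\phi$. Since the ambient connection is the Euclidean one, these give $D_VW$ for the coordinate fields directly. A one--line computation then yields, throughout the neighborhood, $D_NN=D_{\partial_r}\partial_r=0$ and $D_NT=D_{\partial_r}(\phi^{-1}\partial_\theta)=(-\phi^{-2}\phi_r+\phi^{-1}\Gamma^2_{12})\partial_\theta=0$, as well as $D_TN=\phi^{-1}\Gamma^2_{12}\partial_\theta=(\phi_r/\phi)\,T$ and $D_TT=\phi^{-1}D_{\partial_\theta}(\phi^{-1}\partial_\theta)=-(\phi_r/\phi)\,N$ (the $\partial_\theta$--components cancelling). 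Restricting the last two to $\partial\Omega$, where $\phi=1$ and $\phi_r=-k$, gives $D_TN=-kT$ and $D_TT=kN$, which is assertion 2. Equivalently, $D_TT=kN$ and $D_TN=-kT$ are nothing but the Frenet equations for the convex plane curve $\partial\Omega$ taken with its inward normal, while $D_NN=0$ says the normal lines are straight and $D_NT=0$ says $T$ is parallel--transported along them.

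Then assertion 1 is a formal consequence of assertion 2 together with the vanishing of torsion of the Euclidean connection. For $h\in C^{\infty}(\overline{\Omega})$ one has, on $\partial\Omega$,
\[
D_ND_Th-D_TD_Nh=N(Th)-T(Nh)=[N,T]h=(D_NT-D_TN)h=(kT)h=k\,D_Th,
\]
which is the stated interchange rule $D_ND_Th=D_TD_Nh+kD_Th$.

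The whole argument is essentially bookkeeping, so there is no serious obstacle; the one place where care is genuinely needed is the sign/orientation bookkeeping --- that the \emph{inward} normal $N$ equals $+\partial/\partial r$, that $k>0$ is the curvature measured with respect to that normal, and hence that $\phi_r(0,\cdot)=-k$, $D_TT=+kN$, $D_TN=-kT$ --- which I would pin down once and for all against the disc example and against the paper's convention $D_Nu=-\cos\alpha\,\sqrt{1+|Du|^2}$.
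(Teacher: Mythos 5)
Your proof is correct, but it is organized rather differently from the paper's. The paper takes the Frenet relations $D_TT=kN$, $D_TN=-kT$ as input, derives $\phi_r=-k\phi$ from $\phi^2=\langle\partial_\theta,\partial_\theta\rangle$ together with $\langle D_TN,T\rangle=-k$, proves assertion 1 first by directly differentiating $\phi^{-1}\partial_\theta h$ in $r$, and only then obtains $D_NT=0$ by a duality trick: combining formula \eqref{eq2_1}, the symmetry of second partials and assertion 1 gives $\langle D_NT,Dh\rangle=0$ for every $h$, hence $D_NT=0$, and $D_NN=0$ then follows from differentiating $\langle N,N\rangle=1$ and $\langle N,T\rangle=0$. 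You instead fix the extension concretely as Fermi (boundary--normal) coordinates, write the flat metric as $dr^2+\phi^2 d\theta^2$ with $\phi(0,\cdot)=1$, $\phi_r(0,\cdot)=-k$, compute the Christoffel symbols, and read off all four identities of assertion 2 at once (with $D_NT=D_NN=0$ valid in the whole collar, and the Frenet relations appearing as output rather than input); assertion 1 then drops out of torsion-freeness, $D_ND_Th-D_TD_Nh=[N,T]h=(D_NT-D_TN)h=kD_Th$, which is cleaner than the paper's direct differentiation but logically requires assertion 2 first, i.e.\ the order of the two assertions is reversed relative to the paper. Both arguments ultimately rest on the same geometric fact, namely $\phi_r=-k$ on $\partial\Omega$; the paper gets it from the Frenet formula through $\phi\phi_r=\phi^2\langle D_TN,T\rangle$, while you get it from the first variation of arclength of the parallel curves (equivalently, from $\partial_\theta\mapsto(1-kr)T$ under the normal exponential map, which is again Frenet), and you rightly flag this sign normalization as the one point needing care. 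Your version buys a self-contained, purely computational proof and slightly stronger interior statements; the paper's version avoids Christoffel symbols and gets $D_NT=D_NN=0$ by a short abstract argument. Either is acceptable.
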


\begin{proof}
At $\partial\Omega$, the Frenet formula tells us $D_{T}T = kN$ and $D_{T}N = -kT$. Since $|\phi^{-1}\frac{\partial}{\partial \theta}|^2 =1$, we get $\displaystyle \phi^2 = \left\langle \frac{\partial}{\partial \theta},\frac{\partial}{\partial \theta} \right\rangle$. Thus, as in \cite{AW},
\begin{eqnarray*}
\frac{\partial}{\partial r}\phi^2 &=&\frac{\partial}{\partial r}\langle \frac{\partial}{\partial \theta},\frac{\partial}{\partial \theta} \rangle,\\
2\phi \phi_r &=& 2\langle \frac{\partial}{\partial r}\frac{\partial}{\partial \theta},\frac{\partial}{\partial \theta} \rangle,\\
\phi \phi_r &=& \phi^2 \langle \phi^{-1}\frac{\partial}{\partial \theta}\frac{\partial}{\partial r},\phi^{-1}\frac{\partial}{\partial \theta} \rangle\\
 &=& \phi^2 \langle D_T N,T \rangle\\
 &=& -k\phi^2, \ \ \Rightarrow \ \phi_r=-k\phi.
\end{eqnarray*}
For any $h\in C^\infty$, we have
\begin{eqnarray*}
\displaystyle D_{N}D_{T}h&=&\frac{\partial}{\partial r}\left(\phi^{-1} \frac{\partial h}{\partial \theta}\right)\\
\displaystyle &=&-\phi^{-2}\frac{\partial \phi}{\partial r}\frac{\partial h}{\partial \theta} + \phi^{-1}\frac{\partial}{\partial \theta}\frac{\partial h}{\partial r}\\
\displaystyle &=&k\phi^{-1}\frac{\partial h}{\partial \theta}+\phi^{-1}\frac{\partial}{\partial \theta}\frac{\partial h}{\partial r}\\
\displaystyle &=&k D_{T}h+D_{T}D_{N}h,
\end{eqnarray*}
which is the first assertion and it can be written as
$$ [D_N,D_T]h=kD_Th.
$$ We now prove the second assertion and we only need to prove that $D_N T =0$ and $D_{N}N=0$. Using \eqref{eq2_1}, we have
$$
D_ND_Th=N^{i}T^{j}D^{2}_{ij}h + \langle D_{N}T,Dh \rangle = T^{i}N^{j}D^{2}_{ij}h + \langle D_{T}N,Dh \rangle + kD_T h.$$
Re-arranging the order of differentiation we have
$$N^{i}T^{j}D^{2}_{ij}h = T^{i}N^{j}D^{2}_{ij}h.$$
Hence,
$$\langle D_{N}T,Dh \rangle = \langle D_{T}N,Dh \rangle + kD_{T}h.$$
Since $D_T N = -kT$, we have $\langle D_{N}T,Dh \rangle =0$.
For arbitrariness of $h$, we have $D_N T =0$. By $\langle N,N \rangle =1$ and $\langle N,T \rangle =0$, we have
\begin{eqnarray*}
D_N \langle N,N \rangle &=& 2\langle D_N N, N \rangle = 0, \\
D_N \langle N,T \rangle &=& 0.
\end{eqnarray*}
Note that
$$
D_N \langle N,T \rangle=\langle D_N N, T \rangle + \langle D_N T,N \rangle= \langle D_N N, T \rangle.
$$
Thus, $\langle D_N N, T \rangle= 0$ and $D_{N}N = \langle D_N N, N\rangle +\langle D_N N, T \rangle = 0$.
\end{proof}
By the boundary condition for the problem \eqref{eq1_1} we have
\begin{eqnarray}\label{eq2_2}
\displaystyle D_{N}u&=&-\cos\alpha\sqrt{1+|Du|^2},
\end{eqnarray}
which implies that
\begin{eqnarray}\label{eq2_3}
\displaystyle |D_{N}u|^2&=&\cos^2\alpha(1+|Du|^2),
\end{eqnarray}
and
\begin{eqnarray}\label{eq2_4}
\displaystyle |D_{T}u|^2&=&\sin^2\alpha(1+|Du|^2)-1.
\end{eqnarray}
\noindent Differentiating conditions \eqref{eq2_2}-\eqref{eq2_4} in the time and tangential direction respectively. We have
\begin{eqnarray}\label{eq2_5}
\displaystyle D_{N}u_{t} = -\cos\alpha\frac{Du Du_{t}}{\sqrt{1+|Du|^2}},
\end{eqnarray}
\begin{eqnarray}\label{eq2_6}
\displaystyle D_{T}D_{N}u = \sin\alpha(D_{T}\alpha)v-\cos\alpha(D_{T}v),
\end{eqnarray}
\begin{eqnarray}\label{eq2_7}
\displaystyle D_{N}D_{T}u = \sin\alpha(D_{T}\alpha)v-\cos\alpha(D_{T}v)+k D_{T}u,
\end{eqnarray}
\begin{eqnarray}\label{eq2_8}
\displaystyle D_{T}D_{T}u = \frac{\sin\alpha\cos\alpha(D_{T}\alpha)v^2+\sin^2\alpha v D_{T}v}{D_{T}u}.
\end{eqnarray}
The formulae above will play an important role for our gradient estimate of the solution $u$.

\section{Elliptic problem}\label{sect3}
We now consider the \emph{Elliptic problem} \eqref{ell-eq1_2}.
We let $\alpha\in (0, \pi)$ and let $\Omega$ be a convex domain in the plane. In the argument below, we keep to use $\alpha$ to indicate the argument works well for general angles.
\begin{BVP}\label{BVP3_1}
Recall the elliptic version of \eqref{ell-eq1_2} is
\begin{equation}\label{eq3_8}
\left\{
\begin{aligned}
& \frac{1}{\sqrt{1+|Dw|^2}}a^{ij}(Dw) D_i D_j w = C+H(x, Dw) \ &on&\ \Omega\\
& D_N w = -\cos\alpha\sqrt{1+|Dw|^2} \ &on&\ \partial\Omega.
\end{aligned}
\right.
\end{equation}
where $C$ is a constant determined uniquely by \eqref{eq3_9} below.
\end{BVP}
Since the left-hand side of the first equation in \eqref{eq3_8} can be represented as $\displaystyle  div\left( \frac{Dw}{\sqrt{1+|Dw|^2}}\right)$. By applying divergence theorem, we have
$$
\int_{\Omega} C+H(x,Dw) dx = \int_{\Omega} div\left( \frac{Dw}{\sqrt{1+|Dw|^2}}\right) dx,
$$
and
\begin{equation}\label{eq3_9}
\begin{aligned}
C &= \frac{\int_{\Omega} div (\frac{Dw}{\sqrt{1+|Dw|^2}})dx -\int_{\Omega} H(x,Dw) dx}{\int_{\Omega} dx} \\
&= \frac{\int_{\partial \Omega} \frac{Dw}{\sqrt{1+|Dw|^2}}\cdot(-N)ds-\int_{\Omega} H(x,Dw) dx}{|\Omega|} \\
&= \frac{\int_{\partial\Omega} \cos\alpha ds-\int_{\Omega} H(x,Dw) dx}{|\Omega|} .
\end{aligned}
\end{equation}

\begin{theorem}\label{ell-thm3-2}
Under the assumption \eqref{ell-eq1_3} and the assumption
\begin{equation}\label{ell-eq1_3}
p\cdot H_x(x,p)\geq 0, \ \ |H(x,p)|\leq c,
\end{equation}
 we have, for the solution $u$ to the problem BVP \ref{BVP3_1}, some uniform constant $c_{1}=c_{1}(\alpha_{0},\delta_{0},k_{0})>0$ such that
$$ \sup_{\Omega} v \leq c_{1}.
$$
\end{theorem}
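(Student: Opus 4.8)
The plan is to bound $v=\sqrt{1+|Du|^2}$ by the maximum principle. I would first show, using $p\cdot H_x\ge 0$, that $v$ attains its maximum over $\overline\Omega$ on $\partial\Omega$, and then show, using the angle condition \eqref{eq1_4} together with $|H|\le c$, that such a boundary maximum cannot exceed a constant depending only on $\alpha_0,\delta_0,k_0$.

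\emph{Interior reduction.} Writing the first equation of \eqref{eq3_8} as $\operatorname{div}(Du/v)=C+H(x,Du)$, I would differentiate it in the direction $e_k$, obtaining $D_k\operatorname{div}(Du/v)=H_{x_k}+\sum_\ell H_{p_\ell}u_{k\ell}$, then multiply by $u_k$ and sum over $k$; since $\sum_k u_k u_{k\ell}=vD_\ell v$ this gives
\[
\sum_k u_k\,D_k\operatorname{div}\!\Big(\frac{Du}{v}\Big)=(Du\cdot H_x)+v\,(H_p\cdot Dv)\ \ge\ v\,(H_p\cdot Dv),
\]
the inequality being exactly $p\cdot H_x\ge 0$. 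On the other hand, the standard Bochner-type identity for graphs (commute derivatives, use \eqref{eq2_1}) rewrites the left-hand side as $\tfrac1v a^{ij}D_{ij}v$ minus a curvature term of size $|A|^2v\ge 0$ plus terms linear in $Dv$. Thus $v$ is a subsolution of an operator $a^{ij}D_{ij}+b^kD_k$ with $(a^{ij})$ positive definite and no zeroth-order term, and the weak maximum principle yields $\max_{\overline\Omega}v=\max_{\partial\Omega}v$.

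\emph{Boundary estimate.} Let $x_0\in\partial\Omega$ with $v(x_0)=\max_{\overline\Omega}v$. Then $D_Tv(x_0)=0$ and, $N$ being the inner normal, $D_Nv(x_0)\le 0$, hence $vD_Nv(x_0)\le 0$. From $v^2=1+|D_Nu|^2+|D_Tu|^2$ one has $vD_Nv=D_Nu\,D_ND_Nu+D_Tu\,D_ND_Tu$ at $x_0$. I would decompose $a^{ij}D_{ij}u$ in the frame $\{N,T\}$ using \eqref{eq2_1} and Lemma \ref{lem2_1} ($D_TT=kN$, $D_NN=D_NT=0$), use that on $\partial\Omega$ one has $a^{NN}=\sin^2\alpha$, $a^{TT}=v^{-2}+\cos^2\alpha$, $a^{NT}=-D_NuD_Tu/v^2$ (by \eqref{eq2_2}--\eqref{eq2_4}), solve the first equation of \eqref{eq3_8} for $D_ND_Nu$, and substitute \eqref{eq2_7}, \eqref{eq2_8} (with $D_Tv(x_0)=0$) together with \eqref{eq2_2}. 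Collecting powers of $v$, I expect
\[
vD_Nv(x_0)\ \ge\ \frac{k-|D_T\alpha|-|C+H|}{\sin^2\alpha}\,v^2-K(v+1)\ \ge\ \delta_0\,v^2-K(v+1),
\]
with $K=K(\alpha_0,\delta_0,k_0)$; here $\tfrac{1}{\sin^2\alpha}\ge 1$ gives the last inequality. Combining this with $vD_Nv(x_0)\le 0$ forces $\delta_0\, v(x_0)^2\le K(v(x_0)+1)$, i.e. $v(x_0)\le c_1(\alpha_0,\delta_0,k_0)$. By the interior reduction, $\sup_\Omega v\le c_1$.

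\emph{Main obstacle.} The genuinely technical part is the boundary computation: substituting the differentiated contact-angle relations \eqref{eq2_2}--\eqref{eq2_8} into $vD_Nv(x_0)$ and tracking every lower-order contribution and every $\sin\alpha$ in a denominator so as to isolate the coefficient $(k\pm D_T\alpha-\cos\alpha(C+H))/\sin^2\alpha$ of $v^2$ — this is exactly where \eqref{eq1_4} and $|H|\le c$ are used, forcing that coefficient to be $\ge\delta_0/\sin^2\alpha>0$, and where the convexity of the boundary (via $k>0$ and Lemma \ref{lem2_1}) enters. The interior reduction is the familiar gradient-estimate computation for prescribed-mean-curvature graphs and only requires that the $p\cdot H_x\ge 0$ term carry the favourable sign.
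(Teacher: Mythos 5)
Your proposal follows essentially the same route as the paper's proof: the interior step (differentiate the equation, multiply by $u_k$, use $p\cdot H_x\ge 0$ to see that $v$ is a subsolution of an elliptic operator with no zeroth-order term, hence the maximum is attained on $\partial\Omega$) and the boundary step (at the maximum point use $D_Tv=0$ and $D_N|Du|^2\le 0$, i.e.\ $(D_Nu)(D_ND_Nu)+(D_Tu)(D_ND_Tu)\le 0$, solve the equation for $D_ND_Nu$ in the $\{N,T\}$ frame via Lemma \ref{lem2_1}, substitute the differentiated contact-angle relations, and use \eqref{eq1_4} together with $|C+H|\le c_0$ to extract a positive coefficient in front of the top power of $v$) are exactly the paper's Altschuler--Wu style argument; the only cosmetic difference is that you conclude from $\delta_0 v^2\le K(v+1)$ rather than from the paper's linear inequality \eqref{eq3_7}.

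One point you should add: at the boundary maximum the paper first splits into the cases $|D_Tu|\ge 1$ and $|D_Tu|^2<1$, and this is not decorative. The differentiated boundary condition \eqref{eq2_8} (and hence several of your ``lower-order'' contributions, e.g.\ the term $\frac{\cos^2\alpha}{\sin\alpha\,D_Tu}(D_T\alpha)v$) carries $D_Tu$ in a denominator, so your constant $K$ cannot be taken uniform as $D_Tu\to 0$, and the claimed intermediate inequality $vD_Nv\ge \frac{k-|D_T\alpha|-|C+H|}{\sin^2\alpha}\,v^2-K(v+1)$ fails in that regime. The fix is one line: if $|D_Tu|^2<1$, then \eqref{eq2_4} gives $v^2<2/\sin^2\alpha_0$ directly, so the delicate computation is only needed when $|D_Tu|\ge 1$, which is precisely how the paper bounds those terms. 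With that case distinction inserted, your outline matches the paper's proof.
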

\begin{proof}
We plan to show that the maximum must occur on $\partial \Omega$ and then we use the boundary condition to get the uniform gradient estimate.
Note that
$$
0=\partial_{i}\left(\frac{1}{v}a^{ij}\left(u_{k}\right)_{j}\right)-(H_{x_k}+H_p(x,Du)Du_k).
$$
Multiplying both sides by $\frac{1}{v} u_k $ we derive
\begin{eqnarray*}
0
&=& \frac{1}{v} u_k \left(\frac{1}{v} a^{ij} u_{kj}\right)_i- \frac{1}{v} u_k(H_{x_k}+H_p(x,Du)Du_k)\\
&=& \frac{1}{v} \left( \frac{1}{v}a^{ij} u_k u_{kj}\right)_i - \frac{1}{v^2}a^{ij}u_{ki}u_{kj}- \frac{1}{v} u_k(H_{x_k}) - H_p(x,Du)Dv\\
&=& \frac{1}{v} \left( a^{ij} v_j\right)_i - |A|^2- \frac{1}{v} u_k(H_{x_k})-H_p(x,Du)Dv,
\end{eqnarray*}
which is the elliptic equation for $v$.
Since all of the coefficients are bounded in $\Omega$, the weak maximum principle implies that $\displaystyle \sup_{\Omega} v = \sup_{\partial\Omega} v$.

We may assume that the maximum of $v$ occurs at the boundary point $\xi \in \Gamma$. There are two possibilities that one is $|D_T u|^2(\xi,\tau)< 1$ or $|D_{T}u|^2(\xi,\tau)\geq 1$.

If $|D_T u|^2(\xi,\tau)< 1$, then from $\sin^{2}\alpha(1+|Du|^2)=1+|D_T u|^2$ we see that
$$|Du|^2(\xi,\tau) < \frac{2}{\sin^{2}\alpha_{0}} - 1,$$
and the desired bound is established.

Thus, we may assume that $|D_{T}u|^2(\xi,\tau)\geq 1$. At the point $\xi$ we have
\begin{equation*}
\begin{aligned}
&D_N|Du|^2(\xi,\tau)\leq 0,\\
&D_T|Du|^2(\xi,\tau)=0=D_T v(\xi,\tau),
\end{aligned}
\end{equation*}
Hence, by \eqref{eq2_6}-\eqref{eq2_8} we have at $\xi$,
\begin{equation}\label{ell-eq3_1}
D_T D_N u = \sin\alpha(D_T\alpha)v,
\end{equation}
\begin{equation}\label{ell-eq3_2}
D_N D_T u = \sin\alpha(D_T\alpha)v + k D_T u,
\end{equation}
and
\begin{equation}\label{ell-eq3_3}
D_T D_T u = \frac{\sin\alpha\cos\alpha(D_T\alpha)v^2}{D_T u}.
\end{equation}
We now use an idea from \cite{AW}:
\begin{equation}\label{ell-eq3_4}
(D_N u)(D_N D_N u)(\xi,\tau) + (D_T u)(D_N D_T u)(\xi,\tau)\leq 0
\end{equation}
and derive some relation which contains only first derivatives.

 Using the fact that $\xi$ is a maximum point, \eqref{eq2_2}-\eqref{eq2_4} and \eqref{ell-eq3_1}-\eqref{ell-eq3_4} yield at $\xi$:
\begin{eqnarray*}
v\left(C+H(x,Du)\right) &=&a^{TT}D_T D_T u + a^{TN}D_T D_N u + a^{NT}D_N D_T u + a^{NN}D_N D_N u\\
&-&a^{TT}\langle D_T T,Du \rangle - a^{TN}\langle D_TN,Du \rangle- a^{NT}\langle D_NT,Du \rangle- a^{NN}\langle D_N N,Du \rangle \\
&=&\left(\frac{\cos\alpha\sin\alpha\left(D_T \alpha\right)}{D_T u}\right)\left(v^2 + |D_T u|^2\right) + \frac{2k\cos\alpha |D_T u|^2}{v} \\
\displaystyle &+&\sin^2\alpha D_N D_N u + k\cos\alpha\frac{1+|D_Nu|^2}{v}.
\end{eqnarray*}
That is,
\begin{eqnarray*}
\sin^2\alpha D_N D_N u&=&-\left(\frac{\cos\alpha\sin\alpha\left(D_T \alpha\right)}{D_T u}\right)\left(v^2 + |D_T u|^2\right)\\
&-&\frac{2k\cos\alpha |D_T u|^2}{v} - \frac{k\cos\alpha\left(1+|D_Nu|^2\right)}{v} + v\left(C+H(x,Du)\right).
\end{eqnarray*}
Using \eqref{ell-eq3_3} and substituting the expression above for $D_N D_N u$ into \eqref{ell-eq3_4} we obtain
\begin{equation}\label{ell-eq3_5}
\begin{aligned}
& \cos^2\alpha \sin\alpha (D_T \alpha)v\left(\frac{v^2+|D_T u|^2}{D_T u}\right)\\
&+ 2k |D_{T}u|^2 \cos^2\alpha + k\cos^2\alpha(1+|D_N u|^2)- \cos\alpha v^2(C+H(x,Du))\\
&+\sin^2\alpha\left(\sin\alpha\left(D_T \alpha\right)v + kD_T u\right)D_T u\leq 0.
\end{aligned}
\end{equation}

We may invoke following algebraic identities from \cite{AW}:
\begin{equation}
\label{a1}
\displaystyle 2k |D_{T}u|^2 \cos^2\alpha + k\cos^2\alpha(1+|D_N u|^2)+k\sin^2\alpha|D_T u|^2 = k(v^2 - 1),
\end{equation}
\begin{equation}
\label{a2}\displaystyle \frac{\cos^2\alpha \sin\alpha (D_T \alpha)}{D_T u}v\left(v^2+|D_T u|^2\right)+\sin^3\alpha(D_T \alpha)(D_T u)v = \frac{\sin\alpha(D_T \alpha)}{D_T u}v\left(v^2-1\right),
\end{equation}
and
\begin{equation}
\label{a3}\displaystyle v^2 -1 = \frac{|D_T u|^2}{\sin^2\alpha} + \frac{\cos^2\alpha}{\sin^2\alpha}.
\end{equation}
By these we may simplify \eqref{ell-eq3_5} to give
\begin{equation}\label{ell-eq3_6}
\displaystyle k(v^2 -1) + \frac{D_T u}{\sin\alpha}\left(D_T \alpha\right)v^2 \leq \left(\cos\alpha\right)v^2 (C+H(x,Du)) - \frac{\cos^2\alpha}{\sin\alpha D_T u}\left(D_T \alpha\right)v.
\end{equation}
From \eqref{eq2_3} we see that
$$\displaystyle \left|\frac{D_T u}{\sin\alpha}\right| \leq v.$$
Thus, using $|D_T u| \geq 1$ and $v\geq 1$, we may estimate \eqref{ell-eq3_6} as
\begin{equation}\label{eq3_7}
\left(k - |D_T \alpha|-|C+H(x,Du)|\right)v \leq \frac{|D_T \alpha|}{\sin\alpha} + k.
\end{equation}
By assumption \eqref{ell-eq1_3}, $k - |D_T \alpha|-|C+H(x,Du)| \geq k - |D_T \alpha|-c_0\geq \delta_0$.  Hence,
$$v \leq \delta^{-1}_0\left(k + \frac{k}{\sin\alpha}\right),$$
which is the desired result.
\end{proof}

Using the gradient estimate above and the continuity method as outlined in the introduction we get a smooth solution to BVP \ref{BVP3_1}.

We now make a remark.
One possible method for solving BVP \ref{BVP3_1} is to solve the following perturbation problem for $\varepsilon>0$ small and letting $\varepsilon\to 0$ to find the solution as suggested in \cite{AW}. One may solve the problem
\begin{BVP}\label{BVP3_3}
\begin{equation}\label{eq3_10}
\left\{
\begin{aligned}
& \frac{1}{\sqrt{1+|Dw_\varepsilon|^2}}a^{ij}(Dw_\varepsilon) D_i D_j w_\varepsilon - H(x,Dw_\varepsilon) = \varepsilon w_\varepsilon \ &on&\ \Omega\\
& D_N w_\varepsilon = -\cos\alpha\sqrt{1+|Dw_\varepsilon|^2} \ &on&\ \partial\Omega.
\end{aligned}
\right.
\end{equation}
\end{BVP}

We conjecture that the following result is true.
\begin{conjecture}\label{thm3_5} Assume $\Omega$ convex and $\alpha\in (0,\pi)$. Let $u$ be the solution to
the problem BVP \ref{BVP3_1}. Then, up to a constant, the smooth solution $u$ is the suitable limit from the solution sequence of the problem \ref{BVP3_3} as $\epsilon\to 0$.
\end{conjecture}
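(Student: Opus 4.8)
The plan is the perturbation scheme suggested in \cite{AW}, with one extra device that breaks the usual circularity between the $C^0$ estimate and the gradient estimate: I would compare $w_\varepsilon$ with the \emph{vertical translates of the solution $u$ of} BVP \ref{BVP3_1}, whose existence is already granted in Section \ref{sect3}. Concretely, I would work in three stages. First, for each fixed $\varepsilon>0$ the quasilinear operator in \eqref{eq3_10} is proper (strictly decreasing in $w$ because of the term $\varepsilon w$), so the classical capillary theory gives a unique smooth solution $w_\varepsilon$ once an a priori $C^{2,\beta}$ bound is available (and such a bound comes from the estimates below). Second, I would prove estimates for $w_\varepsilon$ that are uniform in $\varepsilon$ after subtracting the divergent constant $C/\varepsilon$, where $C$ is the constant \eqref{eq3_9} attached to $u$. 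Third, I would let $\varepsilon\to 0$, extract a limit, and identify it with $u$ modulo an additive constant. Throughout I work under the standing hypotheses of Section \ref{sect3}, namely $p\cdot H_x\ge 0$, $|H|\le c_0$ and the angle condition \eqref{eq1_4}; as in Theorems \ref{thm1_3}--\ref{thm1_4}, the cleanest setting is $H=H(x)$, where $C$ is intrinsic to the data.

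\emph{The decoupling step.} Since $u$ solves $\mathbb H(u)-H(x,Du)=C$ with $D_Nu=-\cos\alpha\sqrt{1+|Du|^2}$, and $D(u+c)=Du$ for every constant $c$, the translate $u+c$ satisfies the same equation with the same $C$ and the same contact-angle condition. With $c=C/\varepsilon-\min_{\overline\Omega}u$,
\[
\mathbb H(u+c)-H(x,D(u+c))-\varepsilon(u+c)=C-\varepsilon(u+c)=-\varepsilon\Bigl(u-\min_{\overline\Omega}u\Bigr)\le 0,
\]
so $u+c$ is a supersolution of the equation in \eqref{eq3_10}, while $c'=C/\varepsilon-\max_{\overline\Omega}u$ gives a subsolution. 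Since the operator is elliptic and decreasing in $w$, the comparison principle for the capillary problem (as in \cite{AW,CF}; the boundary case reduces, via the contact-angle condition, to a regular oblique-derivative maximum principle, the coefficient of $D_N$ in the linearized boundary condition being $\sin^2\alpha>0$ because $\alpha\in(0,\pi)$) yields $u+c'\le w_\varepsilon\le u+c$, that is, $\bigl|w_\varepsilon-C/\varepsilon\bigr|\le\operatorname{osc}_{\overline\Omega}u$ on $\overline\Omega$, uniformly in $\varepsilon$. In particular $\varepsilon w_\varepsilon=\varepsilon(w_\varepsilon-C/\varepsilon)+C$ obeys $|\varepsilon w_\varepsilon-C|\le\varepsilon\,\operatorname{osc}_{\overline\Omega}u\to 0$, so $\varepsilon w_\varepsilon$ is bounded uniformly in $\varepsilon$.

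\emph{The gradient estimate and the limit.} I would then rerun the proof of Theorem \ref{ell-thm3-2} verbatim, with $C+H(x,Dw)$ there replaced by $\varepsilon w_\varepsilon+H(x,Dw_\varepsilon)$: differentiating the equation in \eqref{eq3_10} reproduces the elliptic inequality for $v=\sqrt{1+|Dw_\varepsilon|^2}$ obtained there, plus one extra term $\varepsilon(v^2-1)/v\ge 0$ coming from $\partial_k(\varepsilon w_\varepsilon)$, which has the favorable sign; since $p\cdot H_x\ge 0$ still holds, the weak maximum principle again gives $\sup_\Omega v=\sup_{\partial\Omega}v$, and the boundary computation is unchanged, ending at $\bigl(k-|D_T\alpha|-|\varepsilon w_\varepsilon+H|\bigr)v\le |D_T\alpha|/\sin\alpha+k$. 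By the decoupling step $|\varepsilon w_\varepsilon+H|\le|\varepsilon w_\varepsilon-C|+|C+H|\le c_0+\varepsilon\operatorname{osc}_{\overline\Omega}u$, so \eqref{eq1_4} gives $k-|D_T\alpha|-|\varepsilon w_\varepsilon+H|\ge\delta_0-\varepsilon\operatorname{osc}_{\overline\Omega}u\ge\delta_0/2$ for all $\varepsilon\le\delta_0/(2\operatorname{osc}_{\overline\Omega}u)$, hence $\sup_\Omega v\le c_1=c_1(\alpha_0,\delta_0,k_0)$ uniformly in such $\varepsilon$. Writing $\bar w_\varepsilon=\frac1{|\Omega|}\int_\Omega w_\varepsilon$ and $\zeta_\varepsilon=w_\varepsilon-\bar w_\varepsilon$, these bounds give $\|\zeta_\varepsilon\|_{C^0}\le 2\operatorname{osc}_{\overline\Omega}u$ and $|D\zeta_\varepsilon|\le c_1$, so the equation for $\zeta_\varepsilon$ is uniformly elliptic with bounded coefficients; De Giorgi--Nash followed by the Schauder theory for quasilinear oblique boundary value problems (Lieberman \cite{Li2}) yields $\|\zeta_\varepsilon\|_{C^{2,\beta}(\overline\Omega)}\le c$ uniformly. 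Then every sequence $\varepsilon_j\to 0$ has a subsequence with $\zeta_{\varepsilon_j}\to \zeta_*$ in $C^2(\overline\Omega)$; since $\zeta_\varepsilon$ solves $\mathbb H(\zeta_\varepsilon)-H(x,D\zeta_\varepsilon)=\varepsilon\zeta_\varepsilon+\varepsilon\bar w_\varepsilon$ with $\varepsilon\zeta_\varepsilon\to 0$, $\varepsilon\bar w_\varepsilon\to C$ (from $|\varepsilon\bar w_\varepsilon-C|\le\varepsilon\operatorname{osc}_{\overline\Omega}u$), and $D\zeta_{\varepsilon_j}\to D\zeta_*$, the limit $\zeta_*$ solves BVP \ref{BVP3_1} with constant $C$ and has zero mean. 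By uniqueness of solutions of BVP \ref{BVP3_1} up to an additive constant (which itself follows from the capillary comparison principle), the zero-mean solution is unique, so $\zeta_*=u-\bar u$ independently of the subsequence; hence $w_\varepsilon-\bar w_\varepsilon\to u-\bar u$ in $C^2(\overline\Omega)$, i.e. $w_\varepsilon$ converges to $u$ up to an additive constant, which is the assertion.

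\textbf{Main obstacle.} With the comparison device in place the argument is essentially mechanical, so the step requiring the most care is precisely that device: one must verify rigorously that the difference of two capillary graphs over a convex $\Omega$ satisfies a genuinely \emph{regular} oblique-derivative boundary condition, so that the Hopf boundary maximum principle applies and the comparison principle holds up to $\partial\Omega$; this is where convexity of $\Omega$ and $\alpha\in(0,\pi)$ enter. A secondary caveat is that for $H$ depending on $p$ the constant $C$ in \eqref{eq3_9} is solution-dependent, so one should fix the solution $u$ of BVP \ref{BVP3_1} first, freeze the associated $C$, and only then form the translates $u+c$; for $H=H(x)$ this issue disappears and the scheme above is complete as stated.
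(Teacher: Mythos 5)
Your proposal addresses a statement that the paper itself does not prove: it is recorded as a conjecture, and the discussion following it (``The reason is below'') stops exactly at the point where your argument begins. The paper's partial route is: existence of $w_\varepsilon$ for each fixed $\varepsilon>0$ quoted from \cite{Jo,Ge} (convexity from \cite{K}); a two-sided bound $|\varepsilon w_\varepsilon|\le c_2$ from the generic barriers $\psi=Ad$ borrowed from \cite{AW}; uniqueness up to additive constants for the limit problem \eqref{eq3_8}; and the observation that $v=\sqrt{1+|Dw_\varepsilon|^2}$ attains its maximum on $\partial\Omega$. What the paper explicitly says it cannot do is turn the crude barrier bound into the smallness needed to rerun the boundary gradient estimate of Theorem \ref{ell-thm3-2} uniformly in $\varepsilon$ and pass to the limit: the constant $c_2$ has nothing to do with $\delta_0$, so the angle condition $k-|D_T\alpha|-|\varepsilon w_\varepsilon+H|\ge\delta_0$ cannot be checked. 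Your decoupling step fills precisely this gap: sandwiching $w_\varepsilon$ between the vertical translates $u+C/\varepsilon-\max_{\overline\Omega}u$ and $u+C/\varepsilon-\min_{\overline\Omega}u$ of the solution $u$ that the conjecture grants gives $|\varepsilon w_\varepsilon-C|\le\varepsilon\,\operatorname{osc}_{\overline\Omega}u$, which is exactly the uniform smallness needed so that for small $\varepsilon$ the boundary computation closes with constant $\delta_0/2$; the interior step, the compactness, and the identification of the limit via the paper's own uniqueness-up-to-constant argument are then routine. The trade-off is clear: your argument presupposes $u$, so it cannot serve the paper's further aim of \emph{constructing} $u$ by the perturbation scheme (there the use of $u$ as a comparison function would be circular), whereas the paper's barriers need no $u$ but yield a bound too weak to close the estimate --- which is why the statement was left as a conjecture. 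For the statement as literally posed (identify the limit of the $w_\varepsilon$ with the given $u$ up to a constant), your use of $u$ is legitimate and the scheme is sound.

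Two caveats, neither fatal but worth fixing. First, your existence remark for fixed $\varepsilon$ (``a unique smooth solution once an a priori $C^{2,\beta}$ bound is available, and such a bound comes from the estimates below'') is circular as written: your uniform estimates concern solutions of \eqref{eq3_10} itself and only for $\varepsilon\le\delta_0/(2\operatorname{osc}_{\overline\Omega}u)$, while a continuity-method existence proof at fixed $\varepsilon$ needs estimates along a homotopy; since the conjecture speaks of ``the solution sequence'' and the paper invokes \cite{Jo,Ge} (note these references do not literally cover the extra $H(x,Dw)$ term), you should simply state that existence of $w_\varepsilon$ is presupposed. Second, the comparison principle you invoke should be executed exactly as in the paper's uniqueness argument: at a positive boundary maximum of the difference the tangential derivatives coincide, the strict monotonicity of $q\mapsto q/\sqrt{1+a^2+q^2}$ forces the normal derivatives to coincide, and the Hopf boundary point lemma (applicable because the zeroth-order coefficient $-\varepsilon$ is nonpositive and the maximum is positive) gives the contradiction; the obliqueness quantity is bounded below by $1-|\cos\alpha|>0$ along the whole mean-value segment, so only $\alpha\in(0,\pi)$ is needed there --- convexity of $\Omega$ enters through $k>0$ in the angle condition \eqref{eq1_4}, not in this step. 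Finally, say explicitly that you keep the standing hypotheses of Section \ref{sect3} ($p\cdot H_x\ge0$, $|C+H|\le c_0$, and \eqref{eq1_4}), since the conjecture as printed does not repeat them but your gradient estimate requires them.
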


The reason is below.
By \cite{Jo,Ge}, we know that solutions to the BVP \ref{BVP3_3} exist for $\varepsilon >0$ and by \cite{K} these solutions are convex ones. If we can show that $\exists c_0$, independent of $\varepsilon$, such that $|\varepsilon w_\varepsilon|^2 \leq c_0$, then we may replace $u_t$ with $\varepsilon w_\varepsilon$ in the gradient estimate of Theorem \ref{ell-thm3-2} and conclude that a limit solution exists for $\varepsilon \rightarrow 0$. In \cite{LTU}, since $\varepsilon\rightarrow0$ implies $|D(\varepsilon w_\varepsilon)|^2 \rightarrow 0$, we know $\varepsilon w_\varepsilon \rightarrow C$, which gives us a solution to \eqref{eq3_8}.

 We borrow the idea from \cite{AW} and we let $\psi$ be a smooth function on $\Omega$ satisfying $D_N \psi < -\cos\alpha \sqrt{1+|D\psi|^2}$ on $\partial\Omega$ and let $A$ be a constant such that $A<-\cos\alpha\sqrt{1+A^2}$ on $\partial\Omega$. Then a function $\psi$ defined to be $\psi = Ad$ near the boundary and extended to be a smooth function on all of $\Omega$ would satisfy our requirements. In any case, for some such choice of $\varphi$, let  $\xi \in \overline{\Omega}$ be a point where $\psi - w_\varepsilon$ has its minimum.\\
\indent If $\xi \in \partial\Omega$, then $D_T \psi(\xi) = D_T w_\varepsilon(\xi)$ and $D_N \psi(\xi) \geq D_N w_\varepsilon(\xi)$. Now, for a fixed constant $a$, the monotonicity in $q$ of the function $\displaystyle \frac{q}{\sqrt{1+a^2 +q^2}}$ gives
$$-\cos\alpha(\xi) > \frac{D_N \psi}{\sqrt{1+|D\psi|^2}}\left(\xi\right) \geq \frac{D_N w_\varepsilon}{\sqrt{1+|Dw_\varepsilon|^2}}\left(\xi\right)= -\cos\alpha(\xi),$$
which is a contradiction.\\
\indent Thus $\xi \in \Omega$ and $D\psi(\xi) = Dw_\varepsilon(\xi)$ and $D^2 \psi(\xi) \geq D^2 w_\varepsilon(\xi)$. Therefore, there exists a constant $c=c(\psi)$ such that
$$c \geq \frac{1}{\sqrt{1+|D\psi|^2}}a^{ij}(D\psi)D_i D_j \psi(\xi)- H(x,D\psi)  \geq \frac{1}{\sqrt{1+|Dw_\varepsilon|^2}}a^{ij}(D w_\varepsilon)D_i D_j w_\varepsilon(\xi)- H(x,Dw_\varepsilon)  = \varepsilon w_\varepsilon(\xi).$$
Hence, $\varepsilon\psi(x) - \varepsilon w_\varepsilon(x) \geq \varepsilon\psi(\xi) - \varepsilon w_\varepsilon(\xi) \geq\varepsilon\psi(\xi) - c$, i.e.,
\begin{equation}\label{eq3_11}
\varepsilon w_\varepsilon(x) \leq \varepsilon\psi(x) - \varepsilon \psi(\xi) + c.
\end{equation}
Thus, we have proved the upper bound of $\varepsilon w_\varepsilon$. Similarly, we can prove the lower bound of $\varepsilon w_\varepsilon$. Assume that $\exists B$ such that $B>-\cos\alpha(x)\sqrt{1+B^2}$ on $\partial\Omega$. Then, we can find a $\psi$ of the form $\psi = Bd$ with $\psi$ satisfying $D_N \psi>-\cos\alpha\sqrt{1+|D\psi|^2}$. For some such choice of $\psi$, let $\xi\in\overline{\Omega}$ be a point where $\psi - w_\varepsilon$ has its maximum.\\
\indent If $\xi \in \partial\Omega$, then $D_T \psi(\xi) = D_T w_\varepsilon(\xi)$ and $D_N \psi(\xi) \leq D_N w_\varepsilon(\xi)$. We then have
$$-\cos\alpha(\xi) < \frac{D_N \psi}{\sqrt{1+|D\psi|^2}}\left(\xi\right) \leq \frac{D_N w_\varepsilon}{\sqrt{1+|Dw_\varepsilon|^2}}\left(\xi\right) = -\cos\alpha(\xi),$$
which is a contradiction.

Thus $\xi \in \Omega$ and $D\psi(\xi) = Dw_\varepsilon(\xi)$ and $D^2 \psi(\xi) \leq D^2 w_\varepsilon(\xi)$. Therefore, there exists a constant $c'=c'(\psi)$ such that
$$c'\leq \frac{1}{\sqrt{1+|D\psi|^2}}a^{ij}(D\psi)D_i D_j \psi(\xi)- H(x,D\psi) \leq \frac{1}{\sqrt{1+|Dw_\varepsilon|^2}}a^{ij}(D w_\varepsilon)D_i D_j w_\varepsilon(\xi)- H(x,Dw_\varepsilon) = \varepsilon w_\varepsilon(\xi).$$
Hence, $\varepsilon\psi(x) - \varepsilon w_\varepsilon(x) \leq \varepsilon\psi(\xi) - \varepsilon w_\varepsilon(\xi)$ implies
\begin{equation}\label{eq3_12}
\varepsilon w_\varepsilon(x) \geq \varepsilon\psi(x) - \varepsilon \psi(\xi) + c'.
\end{equation}
By \eqref{eq3_11} and \eqref{eq3_12}, we can prove $|\varepsilon w_\varepsilon| \leq c_2$ where $c_2 = max\{|c|,|c'|\}$.

We now prove the uniqueness of the solution of BVP \ref{BVP3_3}. Assume that there exist two solutions $w_1$ and $w_2$ solving \eqref{eq3_8} with $C_1$ and $C_2$ on the right hand side and $C_1 < C_2$. Without loss of generality, via a translation by a constant, we may assume that $w_1 \geq w_2$. For $w = w_1 - w_2$, we have
$$L(w)= div \left( \frac{Dw_1}{\sqrt{1+|Dw_1|^2}}\right)- H(x,Dw_1) - div \left( \frac{Dw_2}{\sqrt{1+|Dw_2|^2}} \right)+H(x,Dw_2) = C_1 - C_2 < 0.$$
Note that $H(x,Dw_1)-H(x,Dw_2)=\int_0^1H_p(x, \nabla w_2+(1-t)\nabla w_1))dt \nabla w$,
 and for $F(X)=\frac{X}{\sqrt{1+|X|^2}}$, we have
$$
F(\nabla w_1)-F(\nabla u_2)=\int_0^1(dF(t(\nabla w_2+(1-t)\nabla w_1))dt \nabla w.
$$
Thus, $w$ satisfies a linear elliptic differential inequality $L(w) < 0$ and it follows from the maximum principle that the minimum of $w$ must occur at $\xi \in \partial\Omega$. Then $|D_T w_1|^2 (\xi) = |D_T w_2|^2 (\xi) = a^2$ for some $a\in \mathbb{R}^{+}$. Since both solutions satisfy the same boundary conditions,
$$\displaystyle \frac{D_N w_1}{\sqrt{1+a^2 + |D_N w_1|^2}} (\xi) = \frac{D_N w_2}{\sqrt{1+a^2 + |D_N w_2|^2}} (\xi).$$
Again, we may use the strict monotonicity in $q$ of the function $\displaystyle \frac{q}{\sqrt{1+a^2+q^2}}$ to conclude $D_N w_1 = D_N w_2$. But $D_N w = 0$ yields a contradiction to the Hopf boundary point lemma. Thus, $C_1 \geq C_2$.\\
\indent By reversing the roles of $w_1$ and $w_2$ we may obtain the opposite inequality. Thus, $C_1 = C_2$.
Then we may use the strong maximum principle to know that $w_1 = w_2$ up to a constant.

For the contact angle $\alpha\in (0,\pi)$, letting $w=w_\varepsilon$ and $v=\sqrt{1+|Dw|^2}$, we may compute that
$$
\varepsilon w_kw_{k}+w_kH_{x_k}(x,Dw_\varepsilon)+w_kH_p(x,Dw_\varepsilon)Dw_k = w_k\left[div\left(\frac{Dw}{\sqrt{1+|Dw|^2}}\right)\right]_k,
$$
and
\begin{eqnarray*}
 \varepsilon |Dw|^2+w_kH_{x_k}(x,Dw_\varepsilon)+\frac12H_p(x,Dw_\varepsilon)D|Dw|^2  &=& {w_k}\left[\left( \frac{w_i}{\sqrt{1+|Dw|^2}}\right)_i\right]_k\\
&=&  \left( \frac{1}{v}a^{ij} w_k w_{kj}\right)_i - \frac{1}{v}a^{ij}w_{ki}w_{kj}\\
&=&  \left( a^{ij} v_j\right)_i - |A|^2v.
\end{eqnarray*}
Hence, we know by the maximum principle that $v$ attains its maximum at the boundary. It seems to us that it is difficult to get that
 as in \cite{LTU},$\exists c > 0$ such that $\varepsilon|Dw_\varepsilon|\leq c$ on $\bar{\Omega}$. If this is true, one may get the limit of the solution  sequence and the existence of the solution of BVP \ref{BVP3_1}.

\section{Time derivative estimate and gradient estimates}\label{sect4}
In this section we study the uniform estimate for the problem \eqref{eq1_1}
We remark that the local solution to the problem \eqref{eq1_1} can be obtained as in \cite{Ec}.
By using the maximum principle, we can get the following result.
\begin{lemma}\label{lem4_1} For the solution $u$ to the problem \eqref{eq1_1} on $Q_{T}$, we have
$\displaystyle \sup_{Q_{T}}u_{t}=\sup_{\Omega_{0}}u_{t}$, $\displaystyle \inf_{Q_{T}}u_{t}=\inf_{\Omega_{0}}u_{t}$. That is, for $ c_{0}= max_{\bar{\Omega}}|u_t({0})|$ such that $\forall(x,t) \in Q_{T}$,
$\displaystyle |u_{t}|^2(x,t)\leq c_{0}^2$.
\end{lemma}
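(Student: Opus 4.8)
The plan is to apply a parabolic maximum principle to $w:=u_t$. First I would differentiate equation \eqref{eq1_1} in $t$. Writing $v=\sqrt{1+|Du|^2}$ and using $\partial_t v=\frac1v\langle Du,Dw\rangle$, one computes $\partial_t(u_i/v)=\frac1v a^{ij}w_j$, so that $w$ solves the linear parabolic equation
$$
w_t=\partial_i\!\Big(\frac1v a^{ij}w_j\Big)-H_p(x,Du)\cdot Dw \qquad\text{on }Q_T,
$$
whose coefficients are bounded on compact subsets of $\overline{Q_T}$ (the solution being smooth there) and which, crucially, has \emph{no zeroth-order term}. On the lateral boundary $\Gamma_T$ the differentiated contact condition \eqref{eq2_5} reads $D_N w=-\frac{\cos\alpha}{v}\langle Du,Dw\rangle$; decomposing $Du=(D_Nu)N+(D_Tu)T$ and $Dw=(D_Nw)N+(D_Tw)T$ in the orthonormal frame $\{N,T\}$ and using $D_Nu=-\cos\alpha\,v$, one gets after simplification
$$
\sin^2\alpha\,D_Nw=-\frac{\cos\alpha\,D_Tu}{v}\,D_Tw \qquad\text{on }\partial\Omega .
$$
Since $\alpha$ is a fixed continuous function on the compact set $\partial\Omega$ with values in $(0,\pi)$, $\sin\alpha$ is bounded below by a positive constant, so this is a bona fide oblique (indeed regular) boundary condition $D_Nw=b\,D_Tw$ with bounded $b$.

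Then I would run the maximum principle. Fix $T'<T$ and let $M=\max_{\overline{Q_{T'}}}w$, with $\tau$ the first time this value is attained. If $\tau>0$, choose $x_0\in\overline\Omega$ with $w(x_0,\tau)=M$ and $w<M$ on $\overline\Omega\times[0,\tau)$. If $x_0\in\Omega$, the strong maximum principle forces $w\equiv M$ on $\overline\Omega\times[0,\tau]$, contradicting $\tau>0$. If $x_0\in\partial\Omega$, then $x_0$ maximizes $w(\cdot,\tau)$ over $\partial\Omega$, so $D_Tw(x_0,\tau)=0$, and the boundary relation gives $D_Nw(x_0,\tau)=0$; but the outward normal at $x_0$ is $-N$, so the parabolic Hopf lemma yields $D_{-N}w(x_0,\tau)>0$, i.e.\ $D_Nw(x_0,\tau)<0$, a contradiction. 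Hence $\tau=0$, so $\sup_{Q_{T'}}u_t=\sup_{\Omega_0}u_t$; letting $T'\uparrow T$ gives $\sup_{Q_T}u_t=\sup_{\Omega_0}u_t$. Applying the same argument to $-u_t$, which satisfies the same homogeneous equation and boundary relation, yields $\inf_{Q_T}u_t=\inf_{\Omega_0}u_t$. With $c_0=\max_{\overline\Omega}|u_t(x,0)|$ this gives $-c_0\le u_t(x,t)\le c_0$ on $Q_T$, i.e.\ $|u_t|^2(x,t)\le c_0^2$.

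The main obstacle is the lateral boundary analysis: one must check that differentiating the contact-angle condition in $t$ really produces an admissible oblique condition for $w$ — this is where $\alpha\in(0,\pi)$ strictly enters, keeping $\sin\alpha$ away from $0$ — and then combine it with Hopf's lemma correctly, the point being that at a boundary maximum the tangential derivative vanishes, which through the boundary relation kills the normal derivative and collides with Hopf's strict inequality. A secondary technical point is having enough regularity of the local solution up to $\overline{\Omega}\times\{t\}$ for the maximum principle and Hopf lemma to apply, which is available for the local smooth solution of \eqref{eq1_1} obtained as in \cite{Ec}.
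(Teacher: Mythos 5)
Your proposal is correct and follows essentially the same route as the paper: differentiate \eqref{eq1_1} in $t$ to get a linear parabolic equation for $u_t$ with no zeroth-order term, apply the maximum principle in the interior, and at a lateral boundary extremum use the differentiated contact-angle condition \eqref{eq2_5} together with $\sin\alpha\neq 0$ to force $D_N u_t=0$, contradicting the Hopf boundary point lemma. Your treatment is in fact slightly more careful than the paper's (explicit oblique boundary relation with the tangential term, first-time argument, and the strong maximum principle for the interior case), but it is the same proof in substance.
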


\begin{proof}
We first show that the maximum must occur on the parabolic boundary $\Gamma_{T} \bigcup \Omega_{0}$. Recall that the equation \eqref{eq1_1} is
$$\displaystyle u_{t} = div\left(\frac{Du}{\sqrt{1+|Du|^2}}\right) -H(x,Du).$$
Then, taking the t-derivative for both sides, we have
\begin{eqnarray*}
\displaystyle \frac{\partial u_{t}}{\partial t}+H_p(x,Du)Du_t&=&\partial_{i}\left[\left(\frac{u_{i}}{\sqrt{1+|Du|^2}}\right)_{t}\right]\\
\displaystyle &=&\partial_{i}\left(\frac{u_{it}\sqrt{1+|Du|^2} - u_{i}\left(\sqrt{1+|Du|^2}\right)_{t}}{1+|Du|^2}\right)\\
\displaystyle &=&\partial_{i}\left(\frac{1}{v}\left(\delta_{ij} - \frac{u_{i}u_{j}}{1+|Du|^2}\right)\left(u_{t}\right)_{j}\right)\\
\displaystyle &=&\partial_{i}\left(\frac{1}{v}a^{ij}\left(u_{t}\right)_{j}\right).
\end{eqnarray*}

Since the coefficients $\displaystyle \frac{1}{v}a^{ij}$ satisfy the uniform ellipticity condition in $Q_T$.
We can apply the weak maximum principle of the second order parabolic equation to obtain $\displaystyle \sup_{Q_T} u_{t} = \sup_{\Gamma_T \cup \Omega_{0}}u_{t}$. Next we explore the possibility that maximum occurs at $(\xi,\tau)\in\Gamma_{T}$. Since $\displaystyle \max|u_{t}|^2 = |u_{t}|^2(\xi,\tau)>0$, $(D_{T}u_{t})(\xi,\tau) = 0$, we have from \eqref{eq2_5} that
\begin{eqnarray*}
(D_{N}u_{t})(\xi,\tau)&=&-\cos\alpha\frac{D_{N} u D_{N} u_{t} + D_{T} u D_{T} u_{t}}{\sqrt{1+|Du|^2}}(\xi,\tau)\\
&=&-\cos\alpha\frac{D_{N}u D_{N}u_{t}}{\sqrt{1+|Du|^2}}(\xi,\tau)\\
&=&\cos^{2}\alpha(D_{N}u_{t})(\xi,\tau).
\end{eqnarray*}
That is, $\displaystyle \sin^{2}\alpha(D_{N}u_{t})(\xi,\tau) = 0$. Since $\displaystyle \alpha \in (0,\pi)$, we have $\sin^{2}\alpha\not=0$ and then $(D_{N}u_{t})(\xi,\tau) = 0$. However, $D_{N}u_{t} = 0$ yields a contradiction to the Hopf boundary point lemma. Thus the maximum occurs at $(\xi,\tau) \in \Omega_{0}$.
Similarly, we can apply the weak minimum principle to obtain that the minimum occurs on $\Omega_{0}$.
Then we have $\forall(x,t) \in Q_{T}$, $\displaystyle |u_{t}|^2(x,t)\leq c_{0}^2$, where $c_0 = max |u_0|$.
\end{proof}

\begin{theorem}\label{thm4-2}
Under the assumptions \eqref{eq1_6}, we have some uniform constant $c_{2}=c_{2}(\alpha_{0},\delta_{0},k_{0},u_{0})>0$ such that
$$ \sup_{Q_T} v \leq c_{2}.
$$
\end{theorem}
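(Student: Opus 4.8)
The plan is to run, in the parabolic setting, exactly the argument used to prove Theorem \ref{ell-thm3-2}, with the constant $C$ there now played by $u_t$ and with Lemma \ref{lem4_1} supplying the control on $u_t$. First I would derive the evolution equation satisfied by $v=\sqrt{1+|Du|^2}$: differentiating the first line of \eqref{eq1_1} in the direction $x_k$, multiplying by $v^{-1}u_k$ and summing over $k$ — the very computation carried out for $\varepsilon w_\varepsilon$ in Section \ref{sect3} and in the proof of Theorem \ref{ell-thm3-2}, but now keeping the $\partial_t$ term — gives
\begin{equation*}
\partial_t v = \frac1v\left(a^{ij}v_j\right)_i - |A|^2 - \frac1v\,Du\cdot H_x(x,Du) - H_p(x,Du)\cdot Dv .
\end{equation*}
Since $p\cdot H_x(x,p)\ge0$, the term $v^{-1}Du\cdot H_x(x,Du)$ is nonnegative, so $v$ is a subsolution of a linear parabolic equation with \emph{no} zeroth order term:
\begin{equation*}
\partial_t v \le \frac1v\left(a^{ij}v_j\right)_i - H_p(x,Du)\cdot Dv \qquad\text{on } Q_T .
\end{equation*}

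Working on $\overline\Omega\times[0,T']$ with $T'<T$, the function $v$ is continuous hence bounded there, so the coefficients $v^{-1}a^{ij}$ and $H_p(x,Du)$ are bounded and the weak maximum principle yields $\sup_{\Omega\times[0,T']}v=\sup_{\Gamma_{T'}\cup\Omega_0}v$. On $\Omega_0$ one has $v(x,0)=\sqrt{1+|Du_0(x)|^2}\le c(u_0)$ directly from the initial data, so it remains to bound $v$ at a point $(\xi,\tau)\in\Gamma_{T'}$ at which the maximum is attained; since the resulting bound will not depend on $T'$, letting $T'\uparrow T$ finishes the proof.

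For the boundary estimate, if $|D_Tu|^2(\xi,\tau)<1$ then \eqref{eq2_4} already gives $|Du|^2(\xi,\tau)<2/\sin^2\alpha_0-1$, so assume $|D_Tu|^2(\xi,\tau)\ge1$. Here I would copy the boundary computation in the proof of Theorem \ref{ell-thm3-2} line for line: the only change is that at $(\xi,\tau)$ the equation \eqref{eq1_1} reads $a^{ij}D_iD_ju = v\bigl(u_t+H(x,Du)\bigr)$, so $C$ is replaced throughout by $u_t(\xi,\tau)$. Concretely, $(\xi,\tau)$ is a maximum of $v$ along $\partial\Omega\times\{\tau\}$ and in the inward normal direction, so $D_Tv=D_T|Du|^2=0$ and $D_N|Du|^2\le0$; feeding in \eqref{eq2_6}--\eqref{eq2_8}, the interchange formula of Lemma \ref{lem2_1}, the inequality \eqref{ell-eq3_4}, and the algebraic identities \eqref{a1}--\eqref{a3} exactly as before, one reaches the parabolic analogue of \eqref{eq3_7},
\begin{equation*}
\bigl(k-|D_T\alpha|-|u_t+H(x,Du)|\bigr)\,v \le \frac{|D_T\alpha|}{\sin\alpha}+k \qquad\text{at }(\xi,\tau).
\end{equation*}
By Lemma \ref{lem4_1}, $|u_t|$ is controlled on $Q_T$ by the initial quantity, so together with the bound on $|H|$ the hypothesis \eqref{eq1_6} keeps $k-|D_T\alpha|-|u_t+H(x,Du)|\ge\delta_0>0$; hence $v(\xi,\tau)\le\delta_0^{-1}(k_0+k_0/\sin\alpha_0)$, and therefore $\sup_{Q_T}v\le c_2$ with $c_2=c_2(\alpha_0,\delta_0,k_0,u_0)$. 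Standard parabolic regularity (the equation is uniformly parabolic once $|Du|$ is bounded) then upgrades this to $u\in C^\infty(Q_\infty)$.

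The two points needing genuine care are: (i) checking that, after invoking $p\cdot H_x\ge0$, the evolution inequality for $v$ really carries no zeroth order term — this is precisely what licenses the reduction to the parabolic boundary; and (ii) the bookkeeping at $\Gamma_T$, namely making sure that the quantity replacing $C$, that is $u_t(\xi,\tau)+H(\xi,Du(\xi,\tau))$, is dominated by $c_1$ by combining Lemma \ref{lem4_1} with \eqref{eq1_6}, so that the coefficient $k-|D_T\alpha|-|u_t+H|$ stays bounded below by $\delta_0$ and the last displayed inequality can be solved for $v$. Everything else is a transcription of the elliptic computation already performed in Theorem \ref{ell-thm3-2}.
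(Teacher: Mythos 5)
Your proposal is correct and follows essentially the same route as the paper: derive the evolution equation for $v$ by differentiating \eqref{eq1_1} in $x_k$ and contracting with $u_k/v$, use $p\cdot H_x\ge 0$ and the weak maximum principle to push the maximum of $v$ to $\Gamma_T\cup\Omega_0$, and then repeat the elliptic boundary computation of Theorem \ref{ell-thm3-2} with $C$ replaced by $u_t$ (controlled by Lemma \ref{lem4_1}) to reach $(k-|D_T\alpha|-|u_t|-|H|)v\le |D_T\alpha|/\sin\alpha+k$ and conclude via \eqref{eq1_6}. Your explicit remark that the $H_x$-term has a favorable sign (so $v$ is a subsolution with no zeroth-order term) and the exhaustion by $T'<T$ only make precise steps the paper treats tersely.
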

\begin{proof}
Again, we first show that the maximum must occur on $\Gamma_{T}\bigcup\Omega_{0}$. By the evolution equation for $u_k$, we have
\begin{eqnarray*}
\frac{u_k}{v}u_{tk} +\frac{u_k}{v}(H_{x_k}+H_p Du_k)&=& \frac{u_k}{v}\left[div\left(\frac{Du}{\sqrt{1+|Du|^2}}\right)\right]_k,\\
v_t+\frac{u_k}{v}(H_{x_k}+H_p Du_k) &=& \frac{u_k}{v}\left[\left( \frac{u_i}{\sqrt{1+|Du|^2}}\right)_i\right]_k\\
&=& \frac{u_k}{v}\left(\left(\frac{u_i}{\sqrt{1 + |Du|^2}}\right)_k \right)_i\\
&=& \frac{1}{v} u_k \left(\frac{1}{v} a^{ij} u_{kj}\right)_i\\
&=& \frac{1}{v} \left( \frac{1}{v}a^{ij} u_k u_{kj}\right)_i - \frac{1}{v^2}a^{ij}u_{ki}u_{kj}\\
&=& \frac{1}{v} \left( a^{ij} v_j\right)_i - |A|^2.
\end{eqnarray*}
Since all of the coefficients are bounded in $Q_T$, the weak maximum principle implies that $\displaystyle \sup_{Q_T} v = \sup_{\Gamma_T \cup \Omega_0} v$.
Next we assume that the maximum of $v$ occurs at $(\xi,\tau) \in \Gamma_{T}$. There are two possibilities. Note that, if $|D_T u|^2(\xi,\tau)< 1$, then from $\sin^{2}\alpha(1+|Du|^2)=1+|D_T u|^2$ we see that
$$|Du|^2(\xi,\tau) < \frac{2}{\sin^{2}\alpha_{0}} - 1,$$
and the bound is established.\\
Thus, we may assume that $|D_{T}u|^2(\xi,\tau)\geq 1$ and we may argue as in the elliptic case. At $(\xi,\tau)$.
\begin{equation*}
\begin{aligned}
&D_N|Du|^2(\xi,\tau)\leq 0,\\
&D_T|Du|^2(\xi,\tau)=0=D_T v(\xi,\tau),
\end{aligned}
\end{equation*}
Hence, by \eqref{eq2_5}-\eqref{eq2_7} we have at $(\xi,\tau)$,
\begin{equation}\label{eq4_1}
D_T D_N u = \sin\alpha(D_T\alpha)v,
\end{equation}
\begin{equation}\label{eq4_2}
D_N D_T u = \sin\alpha(D_T\alpha)v + k D_T u,
\end{equation}
\begin{equation}\label{eq4_3}
D_T D_T u = \frac{\sin\alpha\cos\alpha(D_T\alpha)v^2}{D_T u}.
\end{equation}
Now our goal is to derive from the following expression some inequality which contains only first derivatives:
\begin{equation}\label{eq4_4}
(D_N u)(D_N D_N u)(\xi,\tau) + (D_T u)(D_N D_T u)(\xi,\tau)\leq 0.
\end{equation}
To accomplish this we use the evolution equations for $u_t$. Using the fact that $(\xi,\tau)$ is a maximum point, \eqref{eq2_2}-\eqref{eq2_4} and \eqref{eq4_1}-\eqref{eq4_4} yield at $(\xi,\tau)$:
\begin{eqnarray*}
v u_t&=&a^{TT}D_T D_T u + a^{TN}D_T D_N u + a^{NT}D_N D_T u + a^{NN}D_N D_N u\\
&-&a^{TT}\langle D_T T,Du \rangle - a^{TN}\langle D_TN,Du \rangle- a^{NT}\langle D_NT,Du \rangle- a^{NN}\langle D_N N,Du \rangle - Hv\\
&=&\left(\frac{\cos\alpha\sin\alpha\left(D_T \alpha\right)}{D_T u}\right)\left(v^2 + |D_T u|^2\right) + \frac{2k\cos\alpha |D_T u|^2}{v} \\
\displaystyle &-&\sin^2\alpha D_N D_N u + k\cos\alpha\frac{1+|D_Nu|^2}{v} - Hv.
\end{eqnarray*}
That is,
\begin{eqnarray*}
\sin^2\alpha D_N D_N u&=&v u_t -\left(\frac{\cos\alpha\sin\alpha\left(D_T \alpha\right)}{D_T u}\right)\left(v^2 + |D_T u|^2\right)\\
&-&\frac{2k\cos\alpha |D_T u|^2}{v} - \frac{k\cos\alpha\left(1+|D_Nu|^2\right)}{v}+Hv.
\end{eqnarray*}
Using \eqref{eq4_3} and substituting the expression above for $D_N D_N u$ into \eqref{eq4_4} we obtain
\begin{equation}\label{eq4_5}
\begin{aligned}
&-\cos\alpha v^2 u_t + \cos^2\alpha \sin\alpha (D_T \alpha)v\left(\frac{v^2+|D_T u|^2}{D_T u}\right)\\
&+ 2k |D_{T}u|^2 \cos^2\alpha + k\cos^2\alpha(1+|D_N u|^2)\\
&+\sin^2\alpha\left(\sin\alpha\left(D_T \alpha\right)v + kD_T u\right)D_T u - Hv^2 \cos\alpha \leq 0.
\end{aligned}
\end{equation}

We now use \eqref{a1}-\eqref{a3} to simplify \eqref{eq4_5} and we have
\begin{equation}\label{eq4_6}
\displaystyle k(v^2 -1) + \frac{D_T u}{\sin\alpha}\left(D_T \alpha\right)v  -H v^2 \cos \alpha \leq \left(\cos\alpha\right)v^2 u_t - \frac{\cos^2\alpha}{\sin\alpha D_T u}\left(D_T \alpha\right)v.
\end{equation}
From \eqref{eq2_3} we see that
$$\displaystyle \left|\frac{D_T u}{\sin\alpha}\right| \leq v.$$
Thus, using $|D_T u| \geq 1$ and $v\geq 1$, we may estimate \eqref{eq4_6} as
\begin{equation}\label{eq4_7}
\left(k - |D_T \alpha|-|u_t|-|H|\right)v \leq \frac{|D_T \alpha|}{\sin\alpha} + k.
\end{equation}
By assumption \eqref{eq1_6}, $k - |D_T \alpha|-|u_t| -|H| \geq k - |D_T \alpha|-c_1\geq \delta_0$.  Hence,
$$v \leq \delta^{-1}_0\left(k + \frac{k}{\sin\alpha}\right),$$
which is the desired result.
\end{proof}

\section{Asymptotic behavior of the flow \eqref{eq1_1}}\label{sect5}

In this section we assume that $H=H(x,p)=H(x)$ on $\overline{\Omega}\times \mathbb R^2$ is a smooth function such that the elliptic problem \eqref{ell-eq1_2} admits a solution. From now on, we assume that $\alpha$ is the angle function such that elliptic version of \eqref{eq1_1} has a solution
with  $\Omega$ being a bounded convex domain in the plane. This assumption is not strong since we may refer to \cite{Li2} for many existence results.
In fact, in many cases we know that the translating soliton exists \cite{Li2} and we have the result below.
\begin{Corollary}\label{Cor5_1} Let $\alpha\in (0,\pi)$ and let $\Omega$ be a bounded convex domain in the plane.
For a solution $u = u(x,t)$ of \eqref{eq1_1}, $\exists c_3>0$ a uniform constant such that
$$|u(x,t) - Ct| \leq c_3$$
\end{Corollary}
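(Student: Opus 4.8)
The goal is a uniform-in-time bound $|u(x,t) - Ct| \le c_3$ for the global solution $u$ of \eqref{eq1_1}, where $C$ is the constant from \eqref{eq3_9} associated to the elliptic problem \eqref{ell-eq1_2}. The plan is to compare $u(x,t) - Ct$ with a fixed translator $w(x)$, i.e. a solution of \eqref{ell-eq1_2} which exists by hypothesis. Set $\tilde u(x,t) = u(x,t) - Ct$; then $\tilde u$ satisfies the \emph{same} PDE and boundary condition as $u$ does, but now $\tilde u_t = u_t - C = \operatorname{div}(Du/\sqrt{1+|Du|^2}) - H(x) - C$, so a stationary solution of this equation is exactly a translator $w$. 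The natural idea is to run the maximum principle on the difference $\tilde u - w$.

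First I would record that both $\tilde u$ and $w$ solve
\begin{equation*}
 \varphi_t = \operatorname{div}\!\left(\frac{D\varphi}{\sqrt{1+|D\varphi|^2}}\right) - H(x) - C,
\end{equation*}
with $w_t \equiv 0$, and both satisfy $D_N\varphi = -\cos\alpha\sqrt{1+|D\varphi|^2}$ on $\partial\Omega$. Subtracting and using, as in the uniqueness argument for BVP \ref{BVP3_3}, the identity $F(Du) - F(Dw) = \bigl(\int_0^1 dF(Dw + s(Du-Dw))\,ds\bigr)(Du - Dw)$ with $F(X) = X/\sqrt{1+|X|^2}$, the function $m(x,t) = \tilde u(x,t) - w(x)$ satisfies a linear uniformly parabolic equation $m_t = a^{ij}(x,t) D_{ij}m + b^i(x,t) D_i m$ (no zeroth-order term, since $H$ depends only on $x$), with the homogeneous oblique boundary condition obtained by linearizing the contact-angle condition; the strict monotonicity of $q \mapsto q/\sqrt{1+a^2+q^2}$ shows this boundary operator is genuinely oblique, so a boundary maximum of $m$ with $D_N m = 0$ is excluded by the Hopf lemma exactly as in Lemma \ref{lem4_1}. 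Hence $\max_{\overline\Omega} m(\cdot,t)$ is nonincreasing in $t$ and $\min_{\overline\Omega} m(\cdot,t)$ is nondecreasing in $t$; in particular $|m(x,t)| \le \max_{\overline\Omega}|m(\cdot,0)| =: c_3$ for all $t$. Unwinding, $|u(x,t) - Ct - w(x)| \le c_3$, and since $w$ is a fixed smooth function on the compact set $\overline\Omega$ this gives $|u(x,t) - Ct| \le c_3 + \sup_{\overline\Omega}|w|$, which is the claim after renaming the constant.

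The one point that needs care — and which I expect to be the main obstacle — is justifying that the oblique boundary condition for $m$ really does prevent an interior-in-time spatial extremum of $m$ from migrating to $\partial\Omega$ and growing. The subtlety is that $a^{TT}, a^{TN}, a^{NN}$ depend on $Du$, which is only known to be bounded via Theorem \ref{thm4-2} (and the corresponding a priori bound must be in force, which requires the hypotheses \eqref{eq1_6}); one must check that the linearized boundary operator stays uniformly oblique using that gradient bound, so that the Hopf lemma applies with constants independent of $t$. Once the uniform parabolicity and uniform obliqueness are in hand, the comparison is routine: at a would-be boundary maximum of $m$ one has $D_T m = 0$ and the two contact-angle conditions force $D_N u = D_N w$ there (same argument as in the uniqueness proof in Section \ref{sect3}), hence $D_N m = 0$, contradicting Hopf unless $m$ is constant. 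An alternative, should the linearization be delicate to control, is to compare $\tilde u$ directly against the barriers $w \pm c_3$ using the comparison principle for \eqref{eq1_1}-type equations established in the style of \cite{Ec, AW}; this avoids differentiating the boundary condition but still rests on the same obliqueness fact.
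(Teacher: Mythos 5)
Your argument is correct and is essentially the paper's own proof: the paper sandwiches $u$ between the translating solutions $w+Ct\pm c_3$ and invokes the comparison/maximum principle built on the linearization of $F(X)=X/\sqrt{1+|X|^2}$ together with the obliqueness of the contact-angle condition and the Hopf boundary point lemma, which is exactly your analysis of $m=\tilde u-w$ (and your ``alternative'' barrier remark is literally the paper's phrasing). Your extra caution about uniform parabolicity and obliqueness resting on the gradient bound of Theorem \ref{thm4-2} is a fair point the paper leaves implicit, but it does not change the route.
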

\begin{proof}
\indent For a solution $\tilde{u} = \tilde{u}(x)$ of BVP \ref{BVP3_1}, it is obvious that $u(x,t) = \tilde{u} + Ct$ solves the parabolic problem. That is, $u(x,t)$ is a solution that just translates upwards with speed $C$. This result follows by sandwiching the parabolic solution between two translating elliptic solutions and applying a maximum principle similar to the one used in Theorem \ref{thm3_5}.
\end{proof}
Up to now, we have proved the existence of global solutions to \eqref{eq1_1} with $\alpha=0$. The goal of this section is study the asymptotic behavior of the global solution. Our result is below.
\begin{theorem}\label{thm5_2}
Let $u_1$ and $u_2$ be any two solutions of \eqref{eq1_1} and let $u = u_1 - u_2$. Then $u$ becomes a constant function as $t \rightarrow \infty$. In particular, since $\tilde{w} = Ct + w$ solves \eqref{eq1_1} when $w$ solves BVP \ref{BVP3_3}, all limit solutions of \eqref{eq1_1} are $\tilde{w}$ up to translation.\end{theorem}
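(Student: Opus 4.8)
\emph{Outline of the argument.} Put $u=u_1-u_2$. The plan is to show that $u$ solves a \emph{homogeneous} linear parabolic equation with a homogeneous, \emph{uniformly oblique} boundary condition, to deduce that the spatial oscillation of $u$ decreases to a limit, and then to rule out a positive limit by a compactness-and-strong-maximum-principle argument. First I would subtract the two copies of \eqref{eq1_1} and write each difference as an integral of a derivative along the segment $p_s=sDu_1+(1-s)Du_2$, $s\in[0,1]$, exactly as in the uniqueness argument for BVP \ref{BVP3_1}; this gives
\begin{equation*}
u_t=\operatorname{div}\!\big(A(x,t)Du\big)-b(x,t)\cdot Du\ \text{ on }Q_\infty,\qquad D_Nu+\cos\alpha\,\beta(x,t)\cdot Du=0\ \text{ on }\Gamma_\infty,
\end{equation*}
with $A(x,t)=\int_0^1 DF(p_s)\,ds$ where $F(p)=p/\sqrt{1+|p|^2}$, $b(x,t)=\int_0^1 H_p(x,p_s)\,ds$, and $\beta(x,t)=\int_0^1 p_s/\sqrt{1+|p_s|^2}\,ds$. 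By Theorem \ref{thm4-2} the gradients are uniformly bounded, $|Du_i|^2\le c_2$ on $Q_\infty$, so $A$ is uniformly positive definite with uniformly bounded smooth entries, $b$ is uniformly bounded, and, crucially, $|\beta|\le\sqrt{c_2/(1+c_2)}<1$ uniformly; hence $\langle N+\cos\alpha\,\beta,N\rangle\ge 1-|\beta|\ge\mu_0:=1-\sqrt{c_2/(1+c_2)}>0$, i.e. the oblique direction in the boundary condition stays uniformly transversal to $\partial\Omega$. There is no zeroth-order term, and $|u|\le 2c_3$ on $Q_\infty$ by Corollary \ref{Cor5_1}.

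Next I would invoke the maximum principle for such homogeneous oblique problems — an interior extremum propagates backward in time, while a lateral boundary extremum at which $u$ is not locally constant is excluded by the Hopf boundary point lemma, since it would violate the homogeneous condition — to conclude that $M(t):=\max_{\overline\Omega}u(\cdot,t)$ is non-increasing and $m(t):=\min_{\overline\Omega}u(\cdot,t)$ is non-decreasing. Both are bounded, hence convergent, so $\omega(t):=M(t)-m(t)\downarrow\omega_\infty\ge 0$, and the theorem reduces to showing $\omega_\infty=0$.

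To show this I would argue by contradiction. Assuming $\omega_\infty>0$, Theorem \ref{thm4-2} and standard parabolic regularity (Schauder estimates and bootstrapping) give uniform $C^\infty$ bounds for $u_1,u_2$ on $\overline\Omega\times[1,\infty)$, so the time-translates $u_i(\cdot,\cdot+t_k)$, $t_k\to\infty$, subconverge in $C^\infty_{\mathrm{loc}}(\overline\Omega\times\mathbb R)$ — together with the coefficients $A,b,\beta$ — to an \emph{eternal} pair $\bar u_1,\bar u_2$; then $\bar u:=\bar u_1-\bar u_2$ is a bounded eternal solution of a limiting linear uniformly parabolic equation with a homogeneous oblique condition of the same transversality constant $\mu_0$, and $\operatorname{osc}_{\overline\Omega}\bar u(\cdot,t)\equiv\omega_\infty$ for all $t\in\mathbb R$ (because $\omega$ is monotone with limit $\omega_\infty$). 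For $\bar u$ the quantities $\bar M(t)=\max_{\overline\Omega}\bar u(\cdot,t)$ and $\bar m(t)=\min_{\overline\Omega}\bar u(\cdot,t)$ are monotone as above and $\bar M-\bar m\equiv\omega_\infty$, which forces both $\bar M$ and $\bar m$ to be \emph{constant} (a non-increasing function minus a non-decreasing function being constant); hence $\sup_{\overline\Omega\times\mathbb R}\bar u=\bar M$ is attained, at a point $(\xi_0,t_0)$ with $t_0$ arbitrary. If $\xi_0\in\Omega$, the parabolic strong maximum principle gives $\bar u\equiv\bar M$ on $\overline\Omega\times(-\infty,t_0]$, contradicting $\operatorname{osc}\bar u(\cdot,t_0)=\omega_\infty>0$; if $\bar M$ is attained only on $\partial\Omega$ for every time, then $\bar u<\bar M$ throughout $\Omega\times\mathbb R$, and the Hopf boundary point lemma at $(\xi_0,t_0)$ yields $D_{N+\cos\alpha\,\beta}\bar u(\xi_0,t_0)<0$, again contradicting the homogeneous oblique boundary condition. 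This forces $\omega_\infty=0$.

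Finally, from $M(t)\downarrow M_\infty$, $m(t)\uparrow m_\infty$ and $M_\infty-m_\infty=\omega_\infty=0$ one gets $u=u_1-u_2\to M_\infty$ uniformly on $\overline\Omega$, i.e. $u$ becomes a constant; and taking $u_2=\tilde w=Ct+w$ with $w$ a solution of the translator problem BVP \ref{BVP3_1} (so $\tilde w$ is itself a global solution of \eqref{eq1_1}), every global solution $u$ of \eqref{eq1_1} satisfies $u(\cdot,t)-(Ct+w)\to c_\infty$ uniformly for some constant $c_\infty$, i.e. converges to $\tilde w$ up to the vertical translation $c_\infty$. I expect the main obstacle to be the third step, and within it the need to keep the linearized boundary operator \emph{uniformly} oblique — this is precisely where the gradient estimate of Theorem \ref{thm4-2} is used — so that the Hopf boundary point lemma is still applicable after passing to the eternal limit; one must also be careful to use the versions of the weak and strong maximum principles appropriate to oblique-derivative parabolic problems with no zeroth-order term.
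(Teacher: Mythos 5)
Your proposal is correct and follows essentially the same route as the paper: subtract the two solutions, write the difference as a solution of a linear uniformly parabolic equation with a homogeneous oblique (conormal-type) boundary condition via the integral-of-the-derivative trick with $F(p)=p/\sqrt{1+|p|^2}$, show the oscillation is monotone by the maximum principle and Hopf lemma, and then rule out a positive limiting oscillation by passing to an eternal limit along time translates and applying the strong maximum principle. The only difference is that you make explicit several points the paper leaves implicit (uniform obliqueness of the linearized boundary condition via the gradient bound of Theorem \ref{thm4-2}, the lower-order term from $H_p$, and the Schauder-type compactness needed for the eternal limit), which strengthens rather than changes the argument.
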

\begin{proof} Recall for $F(X)=\frac{X}{\sqrt{1+|X|^2}}$, we have
$$
F(\nabla u_1)-F(\nabla u_2)=\int_0^1(dF(t(\nabla u_2+(1-t)\nabla u_1))\nabla udt.
$$
Then $u$ satisfies a linear parabolic equation
\begin{equation}\label{eq4_1}
\left\{
\begin{aligned}
& \frac{\partial}{\partial t} u = div\left(A(x,t) \nabla u\right) \ &on \ Q_T\\
& 0 = \tilde{c}^{ij} D_{i} u N_{j} \ &on \ \Gamma_T
\end{aligned}
\right.
\end{equation}
where
$$A(x,t) =\int_0^1(dF(t(\nabla u_2+(1-t)\nabla u_1))dt$$
and  $ \tilde{c}^{ij}$ are determined similarly. Note that $A(x,t)$ and $\tilde{c}^{ij}$ are positive definite matrices. The strong maximum principle implies that $osc(t) = max\ u(x,t)-min\ u(x,t) \geq 0$ is a strictly decreasing function in time unless $u$ is constant. Now, if $\displaystyle \lim_{t\rightarrow\infty} u(\cdot,t)$ were not a constant function, then a limit of $u_n(x,t) = u(x,t-t_n)$ as $t_n \rightarrow \infty$ would yield a solution on $\Omega \times (-\infty,+\infty)$ which would not be constant but on which $osc(t)$ would be constant. This, however, would contradict the strong maximum principle.
\end{proof}
It follows that, in the case where the average cosine of the contact angle is zero, solutions converge to minimal surfaces. We now give an explicit derivation of this fact.
\begin{Remark}\label{Rem5_3}
If $C = 0$ for the elliptic problem BVP \ref{BVP3_1}, $\displaystyle \lim_{t \rightarrow\infty}u_t \rightarrow 0$. That is, the solutions converge to the corresponding  surface of mean curvature $H(x)$.
\end{Remark}
\begin{proof}
We prove this remark below. We have
$$\displaystyle\frac{d}{dt}\int_{\Omega} v dx = \int_{\Omega} \frac{D_i u_t D_i u}{v}dx = - \int_{\Omega} u_t ^2dx + \int_{\partial\Omega} u_t \cos\alpha ds-\int_{\Omega} u_tH(x),$$
which implies
$$\displaystyle \frac{d}{dt}\left(\int_{\Omega} v dx - \int_{\partial\Omega} u\cos\alpha ds+\int_{\Omega} uH(x)\right) = - \int_{\Omega} u_t^2 dx.$$
Since $C=\displaystyle \int_{\Omega} \cos\alpha ds = 0$ and $u$ is uniformly bounded, Theorem \ref{thm5_2} and Corollary \ref{Cor5_1} imply $\exists c_3 \in \mathbb{R}^{+}$ such that
$$\displaystyle \int^{\infty}_{0}\int_{\Omega} u_t^2 dx dt \leq c_3.$$
One may then apply standard estimates to conclude that $\displaystyle \lim_{t \rightarrow \infty} u_t \rightarrow 0$. That is to say, the limit is the surface of mean curvature $H(x)$.
\end{proof}


\end{document}